\theoremstyle{plain}
\newtheorem{theorem}{Theorem}
\newtheorem{lemma}[theorem]{Lemma}
\theoremstyle{definition}
\theoremstyle{remark}
\newcommand{\vp}[1]{\textcolor{blue}{#1}}
\newcommand{\m}{\text{master}}
\begin{document}


\title{Computationally-Efficient Decomposition Heuristic for the Static Traffic Assignment Problem}

\author{
\name{Venktesh Pandey\textsuperscript{a}\thanks{Corresponding Author: Venktesh Pandey. Email: vpandey@ncat.edu} and Priyadarshan N. Patil\textsuperscript{b}}
\affil{\textsuperscript{a}Assistant Professor, North Carolina Agricultural and Technical State University, North Carolina, USA; \textsuperscript{b}Graduate Research Assistant, The University of Texas at Austin, Texas, USA}
}

\maketitle

\begin{abstract}
Applications such as megaregional planning require efficient methods for solving traffic assignment problems (TAPs) on large-scale networks. We propose a decomposition heuristic that generates approximate TAP solutions by partitioning the complete network into subnetworks which are solved in parallel and use an iterative-refinement algorithm for improving the network partitions.  A novel network transformation and three-stage algorithm are also proposed to solve a constrained shortest path problem as a subproblem of the heuristic. Experiments on various networks show that the heuristic can generate 15.1--67.8\% computational savings in finding solutions with initial relative gap of 0.02. The performance benefits of the proposed heuristic when warmstarting standard TAP algorithms are demonstrated with an average computational savings of 10--35\% over a TAP solver without warmstarting. 
\end{abstract}

\begin{keywords}
Traffic assignment, Spatial decomposition, Parallel schemes for traffic assignment, Statewide modeling, Partitioning algorithms 
\end{keywords}

\section{Introduction}

Traffic assignment problems (TAPs), which assign vehicles to network routes as the last step of the four-step planning process, are well studied. The classical time-independent or static version of TAP takes as input a demand matrix (number of travelers going from one zone to the other) and the network supply side characteristics (transportation network and travel-time functions on links as a function of number of travelers), and solves for flows on network links such that the Wardrop equilibrium condition is satisfied. Static TAP is used ubiquitously for transportation planning purposes such as select link analysis, scenario comparison, and flow assignment in the four-step planning process. With advances in computing power, several efficient algorithms have been proposed to solve the static TAP. This research is motivated by the need to improve the computational efficiency of solving TAPs on large-scale networks.

The first motivation is planning for megaregions. Megaregions cross state and political boundaries and are characterized by trade and infrastructural connections across a much larger area. For example, the north-eastern megaregion spans the states of Connecticut, Rhode Island, New York, Massachusetts, Pennsylvania, Delaware, New Jersey and Virginia, and contains as much as 17\% of the current US population. With increasing intra-megaregion trade and traffic, current statewide or citywide models are not appropriate for the megaregion scale creating a need for computationally-efficient TAP algorithms for these networks.

The second motivation is rooted in applications that require multiple TAP solutions, such as scenario analysis and network design problems\citep{gokalp2021post, patil2022budget}. These applications can benefit significantly from faster TAP algorithms. Furthermore, for such applications we often desire only an approximate solution to the TAP, characterized by a higher \textit{relative gap} value than an optimal solution at which the relative gap is zero~\citep{patriksson2015traffic, patil2020convergence}.

Lately, network decomposition algorithms have been proposed for TAP~\citep{jafari2017decomposition, raadsen2018aggregation}. These algorithms decompose a large network into subnetworks which can be solved in parallel and offer a potential to outperform algorithms that solve TAP on the full network (referred to as centralized algorithms) by making use of multiple machine cores. Furthermore, decentralized computations also offer robustness against centralized machine failures~\citep{ehsanPhDThesis}. However, scaling these algorithms for more than two network partitions and handling subnetwork interactions is challenging. In this article, we propose a decomposition-based heuristic that generates approximate solutions to the TAP in faster computation time than the centralized algorithms.

The contributions of this research include the following: (a) we propose a novel network transformation and three-stage algorithm to solve a restricted shortest path (SP) problem as a subproblem of the heuristic,  
(b) we propose partitioning algorithms that work for general networks without obvious geographical partitions (such as a natural partition created by a river) and discuss their impacts on solving TAP for large scale networks, which is an improvement over prior work~\citep{yahia2018network}, and (c) we propose an iterative refinement approach for network partitioning that improves the computational efficiency of the heuristic. The rest of the article is organized as follows. Section \ref{sec:formulationlitreview} presents the formulation and background for TAP and its solution methods. Sections~\ref{sec:methods} and \ref{sec:otherDetails} present the methodology and the heuristic, and discuss the implementation detail.  Section~\ref{sec:results} reports experimental results and findings on different large-scale test networks. Lastly, Section \ref{sec:conclusions} concludes the article and presents potential future extensions.

\section{Formulation and Literature Review}
\label{sec:formulationlitreview}
This section provides the convex programming formulation for static TAP and then discusses advances in TAP solution methods. 

\subsection{Formulation}
\label{sec:formulation}

Let $G=(N,A)$ be the directed network defined by set of nodes $N$ and set of arcs $A$. For each node $i\in N$, let $\Gamma_i$ denotes the set of all outgoing arcs and $\Gamma^{-1}_i$ denote the set of all incoming arcs. A subset of $N$, denoted by $Z$, indicates the set of centroids where demand originates or terminates. 
The demand from origin zone $r$ to destination zone $s$ is denoted by $d^{rs}$, and the origin-destination (OD) pair is denoted by $<r,s>$. The set of all demand values is recorded in an OD matrix $D$, defined as $D=\{d^{rs}~\vert~r\in Z, s\in Z \}$. Sets $G$ and $D$ are inputs for static TAP. 

The output of static TAP is the steady state flow $x_{ij}$ on each link $(i,j) \in A$, starting at node $i$ and terminating at node $j$. The travel times on each link as a function of flow is given by a link performance function, which is commonly assumed to be an increasing and a convex function of link flow. One commonly used function is the Bureau of Public Roads equation~\citep{BPR}, stated as follows:
\begin{equation}
\begin{centering}
t_{ij}(x_{ij}) = t_{ij}^0(1+\alpha(x_{ij}/u_{ij})^\beta), 
\end{centering}
\end{equation}

where $t_{ij}^0$ is the free-flow travel time (i.e., the travel time on the link with zero congestion), $u_{ij}$ is the capacity of the link, and $\alpha$ and $\beta$ are shape parameters calibrated to the link. If calibration is not possible, the default values are $\alpha$ = 0.15 and $\beta$ = 4.

Let $\pi$ denote a path, and $h_\pi$ denote the number of drivers who choose the path. The set of all paths connecting zones $r$ and $s$ is denoted by $\Pi^{rs}$. Link and path flows can be tied together using the following equation:
\[x_{ij} = \sum_{r\in Z}\sum_{s\in Z}\sum_{\pi\in \Pi^{rs}} \delta_{ij}^\pi h_\pi,\]
where $\delta_{ij}^\pi$ is an indicator variable which takes the value 1 if link $(i,j)$ is used by path $\pi$, and 0 otherwise. Similarly, path travel times $c^\pi$ are defined as the sum of travel times of the links included on the path. Thus,
\[c^\pi =\sum_{(i,j)\in A} \delta_{ij}^\pi t_{ij}. \]
Traffic equilibrium is a steady state of the network where no driver can get an advantage by unilaterally changing their decision. More formally, the principle of user equilibrium states ``Every used route connecting an origin and destination has equal and minimal travel cost". This is also known as Wardrop's first principle~\citep{wardrop}. This principle makes a few assumptions including (a) the assumption of driver's rationality where each driver wants to choose the path between their origin and their destination with the least travel cost, and (b) the assumption of perfect information, where drivers have perfect knowledge of link travel times. With these assumptions, the TAP for user equilibrium can be formulated as a convex program as follows:
\begin{subequations}
\begin{align}
\min_{\mathbf{x},\mathbf{h}} \qquad & \sum_{(i,j) \in A} \int_0^{x_{ij}} t_{ij}(x) dx & \\
\text{subject to:} & & \nonumber \\
    x_{ij} &= \sum_{\pi \in \Pi : (i,j) \in \pi} h_\pi & \forall (i,j) \in A \label{eq:beckmannLinkFlow}\\
    \sum_{\pi \in \Pi^{rs}} h_\pi &= d_{rs} & \forall(r,s) \in Z^2 \label{eq:beckmannPathFlows}\\
    h_\pi &\geq 0 & \forall \pi \in \Pi \label{eq:beckmannNonNegativity}
\end{align}
\label{eq:beckmann}
\end{subequations}

This convex program formulation was first proposed by Martin Beckmann~\citep{Beckmann}. The constraints denote the relation between link flows and path flows (Equation~\eqref{eq:beckmannLinkFlow}), flow conservation across all paths for each OD pair (Equation~\eqref{eq:beckmannPathFlows}), and non-negativity of path flows (Equation~\eqref{eq:beckmannNonNegativity}). Assuming that link travel time functions are continuous guarantees existence of a solution and that these functions are strictly increasing guarantees uniqueness for the convex program. More details on the convex program formulation in Equation~\eqref{eq:beckmann} and its variants can be found in standard texts such as \cite{patriksson2015traffic} and \cite{blubook_vol1_v085}. This formulation has allowed for conventional convex programming approaches to be applied to TAP, as well as 
development of specialized TAP solution algorithms that are discussed next.


\subsection{TAP solution methods}
\label{sec:litreview}
Specialized iterative methods have been developed to solve the convex program in Equation~\eqref{eq:beckmann} efficiently by exploiting properties of TAP. These methods broadly fall into three classes: link-based, path-based and origin-based. Historically, link-based algorithms were the first to be studied, owing to low memory requirements. These methods update the link flow vector in order to strictly reduce the objective function. The method of successive averages algorithm~\citep{powell1982convergence} and Frank-Wolfe (FW) algorithm~\citep{frank1956,leblanc1975} are the two most widely used link based algorithms. Researchers have tried to improve upon these methods by modifying the search direction~\citep{patriksson2015traffic, daneva, mitradjieva2013, holmgren2014}. Link-based methods are easy to implement but are slow and inefficient for larger instances. Additionally, the solution does not include path information. 

Path-based methods address these issues by tracking used path sets, taking advantage of increased computing power. For each OD pair, the algorithms track a used path set and equilibrate flows within the used path set. Two widely known algorithms are gradient projection~\citep{jaya} and projected gradient~\citep{florian}. Some other algorithms in this class are disaggregate simplicial decomposition algorithm~\citep{larsson1992simplicial}, path based FW~\citep{chen2002}, conjugate gradient projection~\citep{lee2003}, slope-based methods~\citep{kumar2010,kumar2014}, and greedy path-based approach~\citep{xie2018greedy}. However, path-based algorithms have significant redundancy in path storage, due to overlap in multiple paths for an OD pair. 

This led to development of origin-based methods which keep track of a set of bushes (a connected acyclic subnetwork rooted at an origin). Knowing the bush structure allows for easy decomposition into path flows. Dial's Algorithm B~\citep{dial2006path} allows for flow equilibration only between the shortest and longest path in a bush, while Bar-gera's OBA~\citep{bar2002origin} shifts flows on multiple paths in a bush. \citet{nie2010class} compares multiple Newton-type bush-based algorithms. \citet{bar2010traffic} proposes TAPAS, an algorithm using flow shifts between paired alternative segments, rather than flow shifts over entire paths. Some other algorithms in this space are local user equilibrium algorithm~\citep{gentile2014local} and i-TAPAS~\citep{xie2016}.

These approaches solve TAP for the entire network. With nonlinear computational complexity for TAP, decomposing and solving multiple smaller problems can lead to better performance. \cite{jafari2017decomposition} decompose the network into smaller subnetworks and alternate solving the master network (using network contractions) and subnetworks till convergence. \cite{raadsen2018aggregation} focuses on creating an alternate lossless representation of the network and performing computational optimization on this representation using an integrated disaggregation-aggregation framework. In this article, we build off the geoegraphical-decomposition model proposed in \cite{jafari2017decomposition}.

Computational improvements have also been sought by parallelization of sub-routines such as shortest path subroutines~\citep{kumar1991scalability,florian2001applications}, or OD pair equilibration in Gradient projection~\citep{chen2020parallel}. Usage of contraction hierarchies~\citep{buchhold2018real,buchhold2019real} or efficient shortest path methods~\citep{ziliaskopoulos1997design} have also been proposed. Each of these efficiencies can be implemented as a complementary dimension to the geographical partitioning implemented in our proposed heuristic.

One last consideration is the usage of `warm-starting' to reduce computation time. Warm-starting has seen some prior usage for traffic assignment algorithms. First suggested by Dial~\citep{dial1999network2}, a feasible traffic assignment solution was proposed as a substitute to shortest path assignment. \citet{dial2006path} also found that warm-starting with a TAP solution reduced computation time for a slightly modified trip table. \citet{peeta2011post} show the benefits of warm-starting TAP algorithms on multiple networks, observing faster convergence. The benefits of warm-starting dynamic TAP using static TAP solutions have also been observed in multiple studies~\citep{levin2011utilizing,Nezamuddin2011,levin2015warm}.

The common motivation for these methods has been the reduction of computation time. The improvements are benchmarked on large-scale networks, including megaregions. In this article, we contribute towards this motivation by developing decomposition-based algorithms (as introduced in Section \ref{sec:methods}). We also exploit the usefulness of the proposed heuristic to generate solutions for efficiently warmstarting TAP, as discussed later.

\subsection{Convergence of TAP solution algorithms}

Traffic assignment algorithms provide link flows (and in some cases, path flows) as output. Relevant metrics such as total system travel time (TSTT) are calculated from the link flows. Given the current link flow vector $\mathbf{x}$, TSTT is  defined as the sum of travel times across all travelers and is calculated using the following formula:
\begin{equation}
    TSTT(\mathbf{x}) = \sum_{(i,j) \in A} t_{ij}(x_{ij})x_{ij}.
\end{equation}
Traffic assignment algorithms terminate based on a predetermined convergence criterion. The convergence criterion indicates proximity to the optimal solution. There are various criteria that are used in practice, such as relative gap, average excess cost or average/total link reduced cost~\citep{bar2010traffic}. \citet{patil2020convergence} provide some guidance on appropriate level of relative gap values required for various network metrics to stabilize. There are several definitions of relative gap in common practice; we use the following one.  Let $\kappa_{rs}(\mathbf{x})$ denote the travel time on the shortest path from origin $r$ to destination $s$ using the link travel times corresponding to $\mathbf{x}$.  The shortest path travel time (SPTT) equals TSTT if all vehicles were on shortest paths (as required by the user equilibrium condition):
\begin{equation}
    SPTT(\mathbf{x}) = \sum_{<r,s> \in Z^2} \kappa_{rs} (\mathbf{x}) d^{rs}.
\end{equation}
\noindent The relative gap (RG) can then be defined as: 
\begin{equation}
    \text{gap}(\mathbf{x}) = TSTT(\mathbf{x})\ -\ SPTT(\mathbf{x})\,
\end{equation}
\begin{equation}
    \text{RG}(\mathbf{x}) = \frac{\text{gap}(\mathbf{x})}{SPTT(\mathbf{x})} = \frac{TSTT(\mathbf{x})}{SPTT(\mathbf{x})}-1\,.
\end{equation}

\section{Methodology}
\label{sec:methods}
\subsection{DSTAP vs DSTAP-Heuristic}
\label{subsec:dstapvsheuristic}
\citet{jafari2017decomposition} proposed a decomposition algorithm for static traffic assignment problem (DSTAP) where a full network is replaced by three components: (a) geographically separate regions called subnetworks, (b) a master network that retains the origins, destinations, and boundary nodes in each subnetwork, and (c) artificial links that approximate the interactions between connecting nodes where the link parameters are estimated using TAP sensitivity analysis.  Figure~\ref{fig:visualDstap}(b) shows the DSTAP master network and subnetworks obtained after decomposing the full network in Figure~\ref{fig:visualDstap}(a). Artificial links in the master network approximate interactions within a subnetwork between an origin/destination node and a boundary node of a subnetwork. On the other hand, artificial links for subnetworks approximate the paths connecting boundary nodes of a subnetwork that traverse other subnetworks.

\begin{figure}[H]
\noindent\makebox[\textwidth]{%
\includegraphics[scale=0.35]{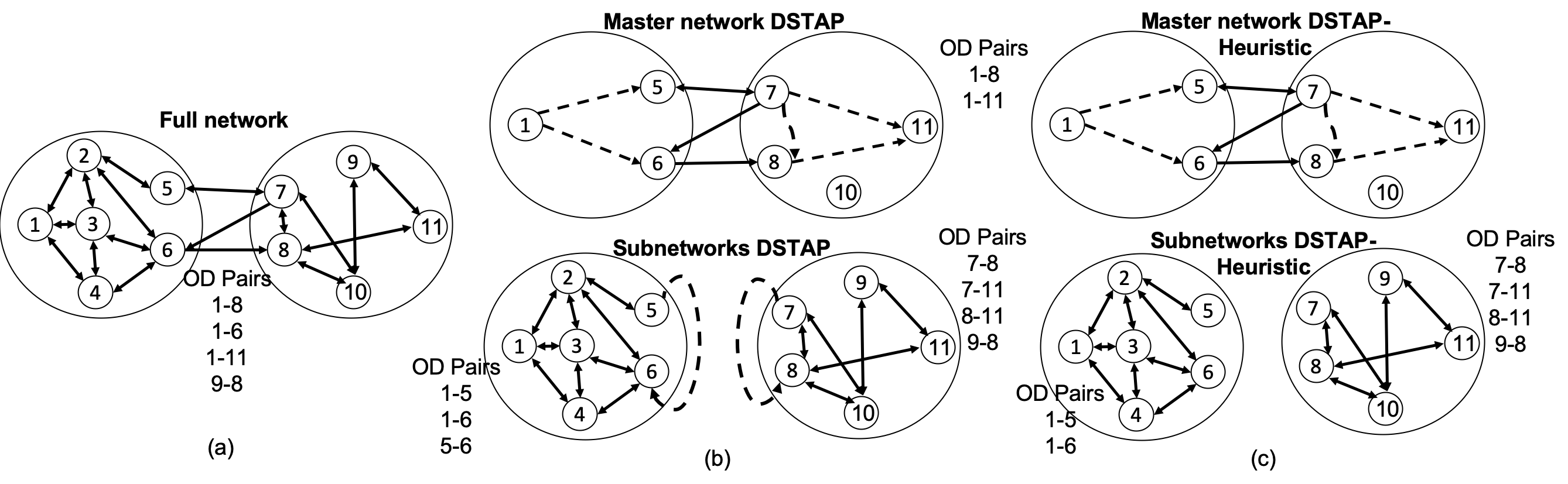}}
\centering
\caption{(a) Full network, (b) DSTAP networks, and (c) DSTAP-Heuristic networks}
\label{fig:visualDstap}
\centering
\end{figure}

The DSTAP procedure is shown in Algorithm~\ref{algo:dstap}. In each DSTAP iteration for the network shown in Figure~\ref{fig:visualDstap}(b), the master network TAP is solved first where demand between OD pairs $1\rightarrow8$ and $1\rightarrow11$ are loaded on paths in the master network. Next, the flows on each master network artificial link is used to update the demand on subnetworks; for example, flow obtained on artificial link $(1,5)$ is added to the demand between OD pair [1,5] in the left subnetwork. Next, the subnetworks are solved in parallel, and the obtained solution is used to find the updated artificial link parameters for the next iteration using a bush-based sensitivity analysis~\citep{jafari2016improved}. Last, the flows on each used path between an OD pair in the master network and subnetworks are mapped to the full network by identifying corresponding paths that replace an artificial link. For example, the flow on path $1\rightarrow 5\rightarrow 7 \rightarrow 11$ will be mapped to the full network by concatenating the paths connecting 1 and 5 in subnetwork 1, with link $(5,7)$, followed by paths connecting $7$ and $11$ in subnetwork 2. \cite{jafari2017decomposition} showed that the DSTAP iterative procedure is guaranteed to converge to the optimal TAP solution with additional computational savings for large-scale networks.

    \begin{algorithm}[]
		\caption{DSTAP procedure for solving TAP~\citep{jafari2017decomposition}}
		\label{algo:dstap}
		\begin{algorithmic}[]
			\State \textbf{Inputs}: network file, trips file, networks partitions as subnetworks
			\State \textbf{Step 1}: Set up --- validate partitions for connectivity, create master networks and subnetworks, generate master network artificial links and corresponding artificial origin-destination (OD) pairs inside subnetworks. 
			\State \textbf{Step 2}: Optimize
			\While {relative gap on the full network $>$ threshold AND iteration number $<$ desired maximum iterations}
				\State Solve master network to a desired gap
				\State Update the demand on subnetwork artificial OD pairs
				\State Solve the subnetworks in parallel with the updated demand to a desired gap
				\State Update the parameters for master network artificial links and subnetwork artificial links using TAP sensitivity analysis
			\EndWhile
			
			\State \textbf{Outputs}: Final full network gap and link flows/costs
		\end{algorithmic}
	\end{algorithm}

Implementing DSTAP for megaregions and other large-scale networks is advantageous as it distributes the computations over subnetworks, which can each be solved in parallel. However, the number of artificial links generated can pose a computational bottleneck. In the Austin network tested by \cite{jafari2017decomposition}, there were approximately 74000 artificial links in the master network and 3000 artificial links across the subnetworks (more details are presented in Table~\ref{tab:dstapVSheuristic}).

As observed, the number of master network artificial links is significantly high and one might expect their inclusion to pose a computational burden for DSTAP. However, that is not the case. This is because, for the case with 2 subnetworks, the used paths in the master network connecting an OD pair only includes 2 artificial links: one connecting origin to the boundary node of origin's subnetwork, and the one connecting boundary node of destination's subnetwork to the destination. Additionally, artificial links in the master network are essential in modeling the interaction between interior and boundary nodes. Thus, master network artificial links are not a primary challenge.

On the other hand, subnetwork artificial links (SALs) such as (dashed) links (5,6) and (7,8) in subnetworks in  Figure~\ref{fig:visualDstap}(b), which represent all the paths connecting the two nodes completely outside the given subnetwork, are particularly challenging due to three reasons. 

First, at any DSTAP iteration, a path replacing an SAL may itself include SALs in other subnetwork, which adds an unnecessary complexity to the calculation of sensitivity parameters and mapping of flow on the full network. For example, consider the network shown in Figure~\ref{fig:issue1DSTAP}(a) where the used paths at equilibrium connecting nodes $a$ and $d$ in subnetwork 1 are shown. If we replace interactions outside the subnetwork with artificial links, then artificial link $(a,d)$ is modeled by conducting sensitivity analysis of OD pair $(e,h)$ in subnetwork 2, whose paths include another artificial link $(f,g)$. This pattern where a SAL may replace other (possibly multiple) SALs create computational issues of SAL parameters' from one iteration to the next and also while mapping flow from SALs to full network paths.

\begin{figure}[h]
\centering
\subfigure[]{%
\resizebox*{4.5cm}{!}{\includegraphics{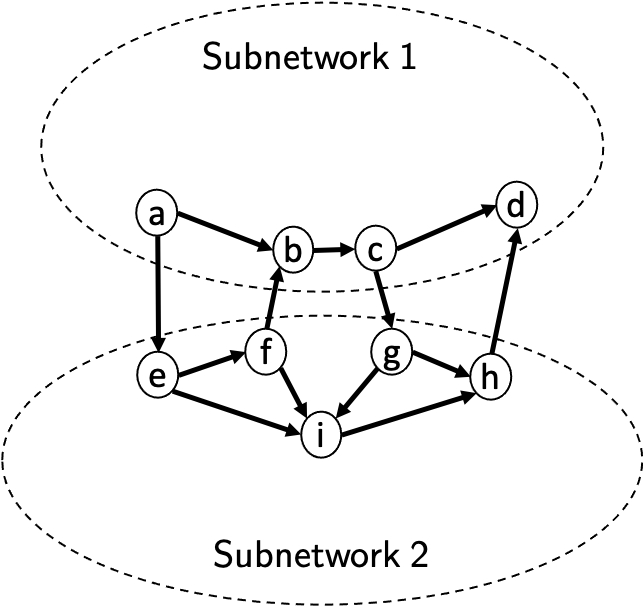}}}\hspace{2pt}
\subfigure[]{%
\resizebox*{4.5cm}{!}{\includegraphics{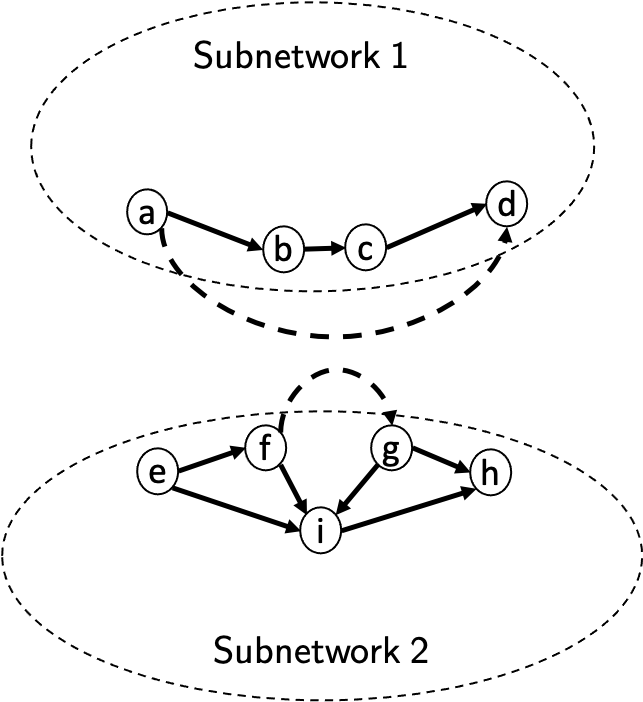}}}
\caption{Issue 1 where a SAL (a,d) in subnetwork 1 includes a path that includes another SAL $(f,g)$ in subnetwork 2 creating computational burden with flow mapping and computing SAL parameters using sensitivity analysis.} \label{fig:issue1DSTAP}
\end{figure}

Second, as discussed in~\cite{jafari2017decomposition}, modeling SALs for more than two subnetworks requires approximating a part of the master network in all other subnetworks, which adds additional computational burden by increasing the number of links in each subnetwork. For example, in the figure shown below, we observe that to model outside-the-network interactions between nodes $a_1$ and $a_2$ in subnetwork 1, we require including 12 extra links (with regional and master-network artificial links included), which can make the process of solving each subnetwork cumbersome.

\begin{figure}[h]
\centering
\subfigure[]{%
\resizebox*{5.5cm}{!}{\includegraphics{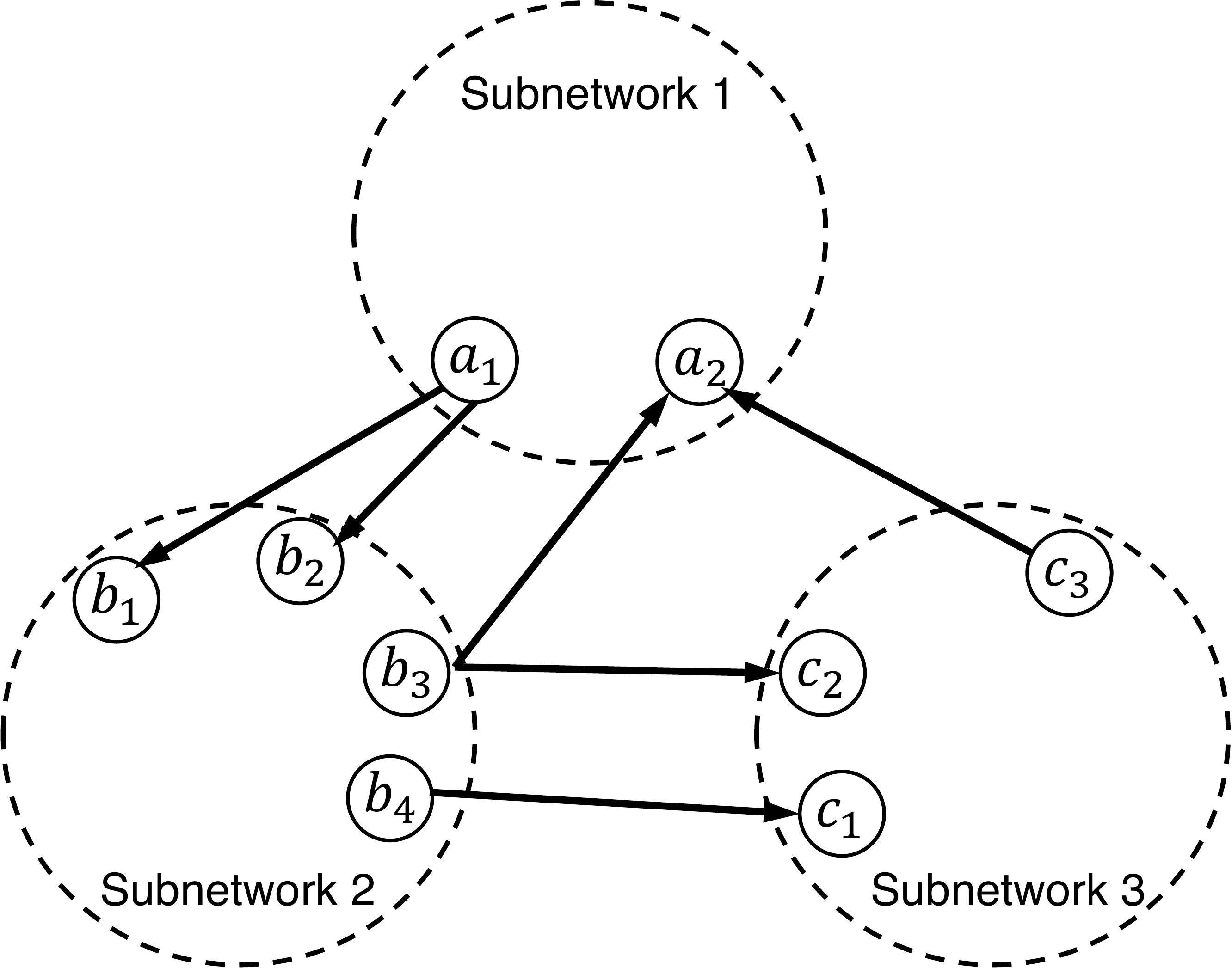}}}\hspace{1pt}
\subfigure[]{%
\resizebox*{4.7cm}{!}{\includegraphics{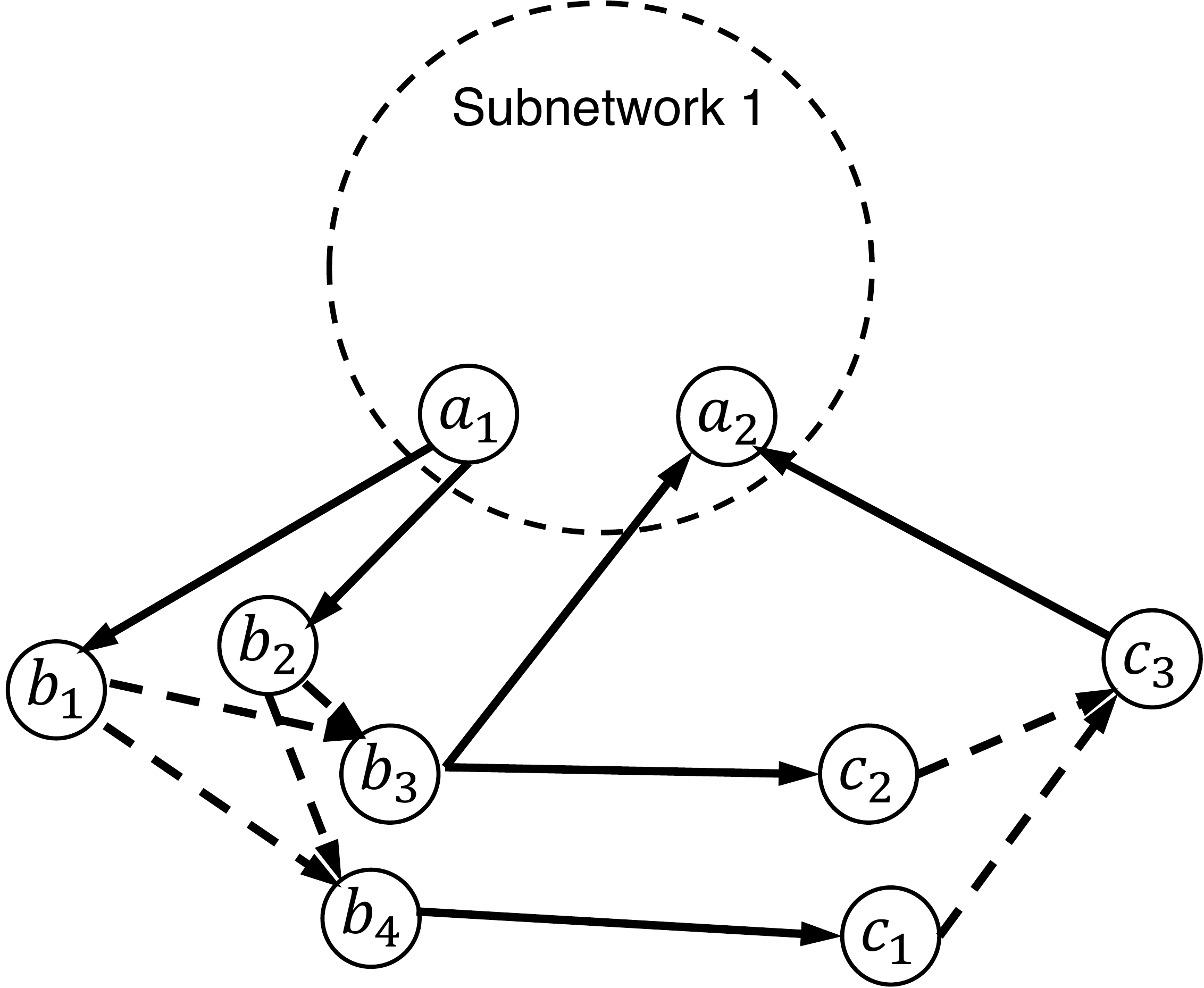}}}
\caption{Issue 2 with subnetwork artificial links where if more than two partitions are considered (part (a)), then capturing interactions outside of a subnetwork requires handling multiple additional links and nodes for every pair of boundary nodes (as shown in (b)).} \label{fig:issue2DSTAP}
\end{figure}

Last, there is an additional computation burden posed by the issue when a boundary node in a subnetwork connects to multiple boundary nodes of another subnetwork. For example, for the network in Figure~\ref{fig:issue3DSTAP} modeling the associated OD pair in subnetwork 2 for the SAL $(a,d)$ in subnetwork 1 requires modeling extra nodes $a$ and $d$ and regional links as part of subnetwork 2, which increases the computational complexity of DSTAP.

\begin{figure}[h]
\centering
\subfigure[]{%
\resizebox*{4.5cm}{!}{\includegraphics{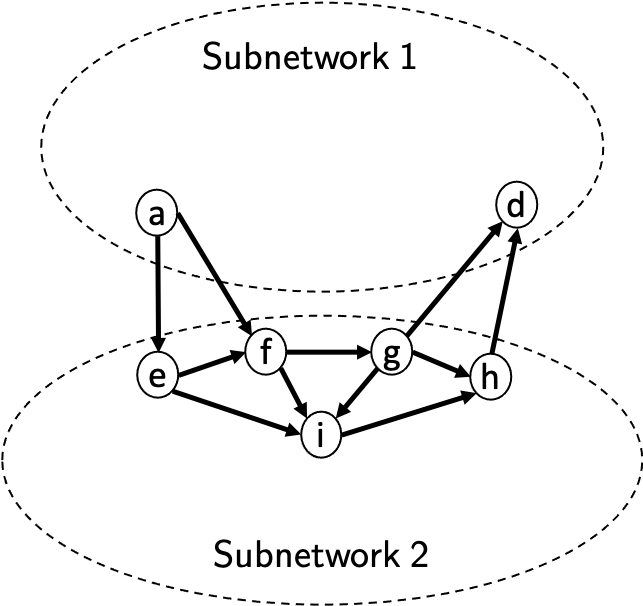}}}\hspace{1pt}
\subfigure[]{%
\resizebox*{4.5cm}{!}{\includegraphics{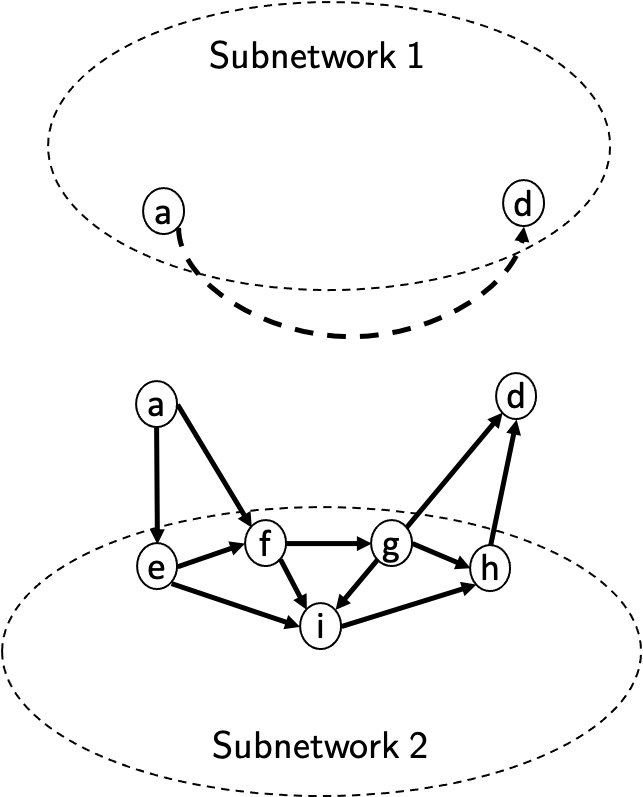}}}
\caption{Issue 3 where modeling the associated OD pair in subnetwork 2 for the SAL $(a,d)$ in subnetwork 1 requires modeling extra nodes $a$ and $d$ and regional links as part of subnetwork 2.} \label{fig:issue3DSTAP}
\end{figure}

Given that the objective of DSTAP is to improve the computational efficiency of solving the TAP  by creating geographically separable partitions, the three issues created due to SALs create computational bottlenecks. In this research, we modify DSTAP by simply dropping the SALs (networks in Figure~\ref{fig:visualDstap}(c)). Artificial links in the master network are left as it is, since they are essential in representing routes connecting interior nodes with other subnetworks. Dropping SALs is a reasonable assumption when there is limited interaction between subnetworks.  Because we exclude SALs, the convergence of DSTAP is no longer guaranteed and DSTAP now becomes a heuristic, which we call DSTAP-Heuristic. Algorithm~\ref{algo:disAlgo} shows the steps for DSTAP-Heuristic in detail, where TAP subproblems are solved using the  Goldstein-Levitin-Polyak gradient projection algorithm~\citep{jaya}.
	
	
	\begin{algorithm}[]
			\caption{DSTAP-Heuristic for solving TAP}
			\label{algo:disAlgo}
			\begin{algorithmic}[]
			\State \textbf{Inputs}: network file, trips file, networks partitions as subnetworks
			\State \textbf{Step 1}: Set up --- validate partitions for connectivity, create master networks and subnetworks, generate master net artificial links and corresponding artificial origin-destination (OD) pairs inside subnetworks. 
			\State \textbf{Step 2}: Optimize
			\While {relative gap on the full network $>$ threshold AND iteration number $<$ desired maximum iterations}
				\State Solve master network to a desired gap
				\State Update the demand on subnetwork artificial OD pairs
				\State Solve the subnetworks in parallel with the updated demand to a desired gap
				\State Update the parameters for master network artificial links
				\State Map flows on used paths between OD pairs in the master network and subnetworks to the full network and compute the full network relative gap
			\EndWhile
			
			\State \textbf{Outputs}: Final full network gap and link flows/costs
			\end{algorithmic}
		\end{algorithm}

Due to the heuristic nature of the algorithm, the relative gap values obtained on the full network after each iteration of DSTAP-Heuristic are not guaranteed to decrease or converge to the optimal solution. However, removing SALs allows each iteration of DSTAP-Heuristic to be faster than DSTAP, which as we demonstrate in Section~\ref{sec:results}, can be used to warmstart the TAP problem and obtain faster solution than using a centralized method.




\subsection{Partitioning algorithms}
Computational performance of DSTAP and DSTAP-Heuristic also depends on the choice of subnetwork boundaries. A good partition is the one that creates fewer boundary nodes and where the subnetworks are ``balanced" in size and have minimal ``interactions" among each other. \cite{yahia2018network} compared two partitioning algorithms: the Shortest Domain Decomposition Algorithm (SDDA), which incrementally builds the subnetworks using seed nodes located far apart, and a flow-weighted Spectral partitioning algorithm that minimizes the flow across all cut arcs (assuming the equilibrium flows are available). 

While both SDDA and Spectral partitioning algorithms can be applied for the DSTAP-Heuristic, we desire partitions that minimize the flows leaving the subnetwork and entering back again, as this minimizes the role of SALs providing better convergence. For a network with two subnetwork partitions, we define a statistic $\psi$ as the difference between the total flow across the cut arcs (interflow) and the total demand for OD pairs with origins and destinations in different subnetworks (interdemand). That is, 
\begin{equation}
    \psi
    = \text{Interflow} - \text{Interdemand}.
    \label{eq:psi}
\end{equation}
If at equilibrium no path between an OD pair leaves a subnetwork and enters it back again (implying zero flow on all SALs), then $\psi=0$. We hypothesize that partitions with lower value of $\psi$ generate solutions with lower relative gap than the ones with higher $\psi$. As discussed in Section~\ref{sec:results}, lower $\psi$ values result in a solution with lower or equal relative gap (after 1 iteration of DSTAP), compared to solutions with higher $\psi$ values.

We propose an iterative partitioning-refinement algorithm which is based on refinement approach proposed in~\cite{fiduccia1982linear} (commonly referred as the FM refinement). Given an initial partition, the algorithm performs local improvement by evaluating the boundary nodes and moving one of the nodes that results in the highest reduction in the current value of $\psi$, until a fixed number of iterations or until no refinement is possible. The details of the $\psi$-FM algorithm are similar to the FM algorithm shown in Algorithm~\ref{algo:psiFM}.  
%

\begin{algorithm}[H]
\begin{algorithmic}
\caption{$\psi-$FM algorithm for improving partitions}
\label{algo:psiFM}
\State Input: given partition of the network with an evaluated $\psi$
\While{$\psi$ of partition cannot be lowered further}
\State Evaluate $\psi$
\State Loop through all boundary nodes and compare the reduction in $\psi$ if nodes were to be shifted from one partition to the next
\State Select the node which leads to highest reduction in the value of $\psi$
\State Revise the partition after shifting the selected node
\EndWhile
\end{algorithmic}
\end{algorithm}

In our experiments, we consider the $\psi$-FM refinement with different initial partitions including SDDA, Spectral, and the partitions generated by the well-known partitioning solver METIS~\citep{karypis1998fast}. We adopt two variants of the METIS solver:
\begin{itemize}
    \item METISv1: this partitioning heuristic partitions a network into $n$ balanced partitions with minimal total flow across all the cut links. It shares the same objective as the Spectral partitioning, though unlike Spectral partitioning it utilizes multi-level partitioning heuristic than the eigenvalue decomposition. See the documentation for flow-weighted METIS solver for additional details~\citep{karypis1998fast}.
    \item METISv2: this partitioning heuristic is same as METISv1, except that the flow weight on each link is replaced with a value of 1. Effectively, METISv2 solves for a partition with minimal number of cut links.
\end{itemize}

We note that the definition of $\psi$ in Equation~\eqref{eq:psi} can be extended to more than one partition; however, the iterative $\psi-$FM algorithm is not guaranteed to converge for more than two partitions~\citep{fiduccia1982linear} . Thereby, we focus our experiments with $\psi-$FM for networks with two-partitions and leave the detailed analysis for more than two partitions for future work.

\section{Other modeling details for DSTAP-Heuristic}
\label{sec:otherDetails}
We now present some specific modeling choices for the DSTAP-Heuristic necessary for full reproducibility of results.

\subsection{Constrained shortest path problem for the master network}
An artificial link in the master network either connects an origin inside each subnetwork with a boundary node of the subnetwork, or connects a boundary node with a destination in the subnetwork. This artificial link models all the paths in the subnetwork connecting its head and tail nodes. However, depending on the choice of partition, it is possible that origin and destination nodes may be located at the boundary of the subnetwork, resulting in feasible paths that include two consecutive artificial links. For example, the master network shown in Figure~\ref{fig:masterNetTransform}(a) (corresponding to the full network in Figure~\ref{fig:visualDstap})  includes both nodes $8$ and $11$ as destinations in subnetwork 2, where node $8$ is also a boundary node (whose incoming links include both an artificial link and a physical link).

We argue that a path in the master network must not include two consecutive artificial links. This is because, if there were two consecutive artificial links $x$ and $y$, then the subpath $[x,y]$ effectively models all paths within a subnetwork connecting the head node of $x$ (denoted by $x_h$) with the tail node of $y$ (denoted by $y_t$); however, these paths should instead be modeled through a direct artificial link connecting $x_h$ and $y_t$. 

Standard label-correcting algorithms cannot be used to find shortest paths under this constraint, as the Bellman property is not satisfied. For example, consider the destination nodes $8$ and $11$ in the master network shown in Figure~\ref{fig:masterNetTransform}(a). The shortest path to node $11$ which does not contain two consecutive artificial links is $\left[ 1,5,7,6,8,11\right]$; however, the subpath $\left[ 1,5,7,6,8\right]$ is not the SP to node $8$. We now present two approaches for finding the shortest paths under the constraint of no consecutive artificial links.
 
\begin{figure}[h]
\noindent\makebox[\textwidth]{%
\includegraphics[scale=0.5]{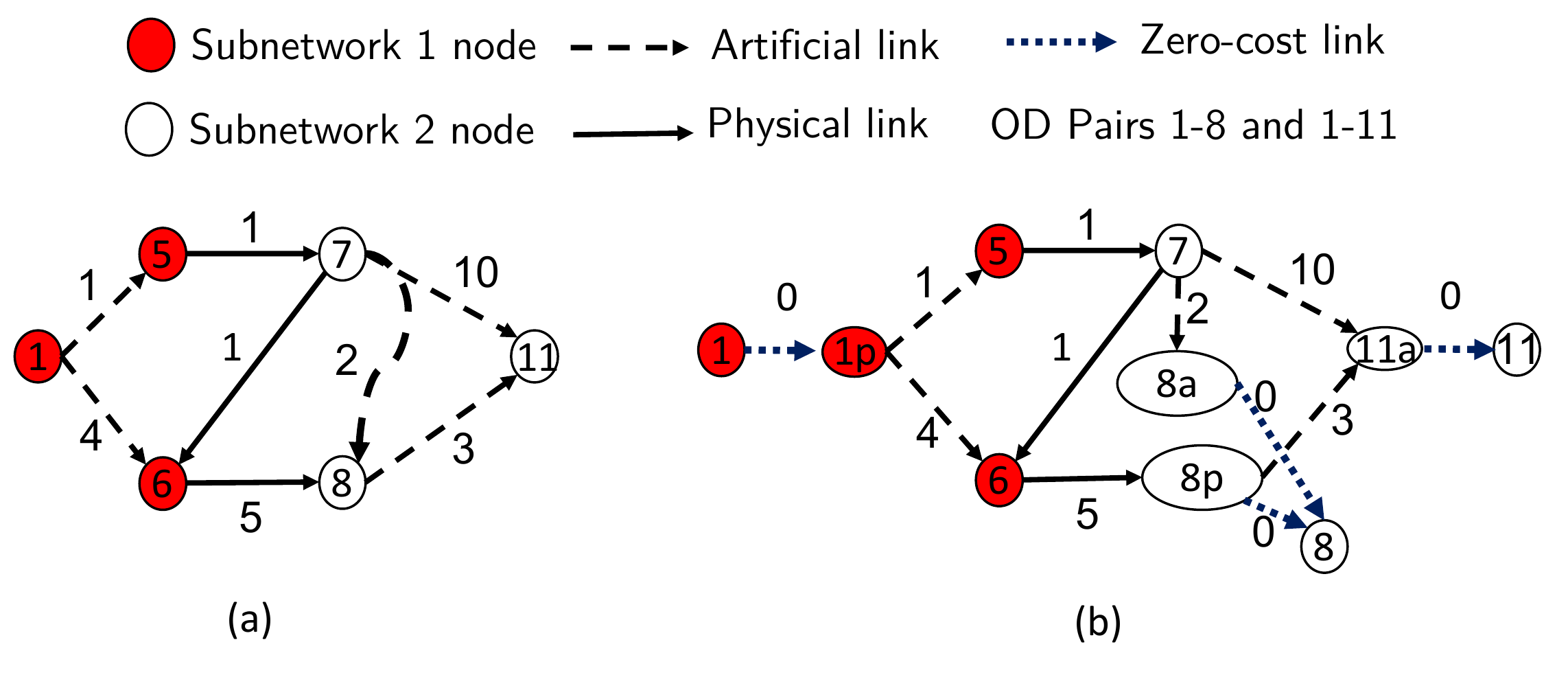}}
\centering
\caption{(a) Original master network, and the (b) transformed master network}
\label{fig:masterNetTransform}
\centering
\end{figure}

\paragraph*{Approach 1 Network transformation for any number of subnetworks} First, we propose a network transformation for solving the constrained SP problem. The proposed transformation creates additional nodes and links for every zone in the network and is presented in detail in Algorithm~\ref{algo:netTransformation}. 

\begin{algorithm}[H]
\begin{algorithmic}
\caption{Master Network Transformation}
\label{algo:netTransformation}
\State Input: Master network $G_{\m} =(N_{\m} ,  A_{\m} , Z_{\m} )$
\State Output: Transformed master network $G_{t} =(N_{t} ,  A_{t} , Z_{t} )$
\vspace{3mm}
\State Set transformed network to be identical to the given master network: $G_t \leftarrow G_\m$
\State Define $N_{\text{child}}$ and $A_{\text{0cost}}$ as empty sets.
\For{all centroid nodes (or zones) $z \in Z_t$}
    \State Create two ``child nodes" $z_p$ and $z_a$ and add it to sets:
    \State \indent $N_t \leftarrow N_t \cup \{z_p,z_a\}, \quad N_{\text{child}} \leftarrow N_{\text{child}} \cup \{z_p,z_a\} $
    \If{$z$ is only an origin node}
    \State Create two links with zero costs: $(z, z_p)$ and $(z, z_a)$ 
    \State Add the created links to sets $A_t$ and $A_{\text{0cost}}$
    \ElsIf{$z$ is only a destination node}
    \State Create two links with zero costs: $(z_p, z)$ and $(z_a, z)$ 
    \State Add the created links to sets $A_t$ and $A_{\text{0cost}}$
    \ElsIf{$z$ is both an origin and a destination node}
        \State Duplicate the zone for its origin and destination equivalent: $z_o$ and $z_d$
        \State Create four links with zero costs: $(z_o, z_p)$, $(z_o, z_a)$,  $(z_p, z_d)$ and $(z_a, z_d)$ 
        \State Add the created links to sets $A_t$ and $A_{\text{0cost}}$
    \EndIf
    
\EndFor
\For{all centroid nodes $z \in Z_t$}
    \For {all incoming links $l\in \Gamma^{-1}_z$ such that $l \notin A_{\text{0cost}}$} 
        \State Remove link $l$ from incoming link set of node $z$: $\Gamma_{z}^{-1} \leftarrow \Gamma_z^{-1} \setminus \{ l \}$
        \If{$l$ is a physical link}
            \State Add $l$ to incoming set of child node $z_p$: $\Gamma_{z_p}^{-1} \leftarrow \Gamma_{z_p}^{-1} \cup \{ l \}$
        \ElsIf{$l$ is an artificial link}
            \State Add $l$ to incoming set of child node $z_a$: $\Gamma_{z_a}^{-1} \leftarrow \Gamma_{z_a}^{-1} \cup \{ l \}$
        \EndIf
    \EndFor
\EndFor
\For{all centroid nodes $z \in Z_t$}
    \For {all outgoing links $l\in \Gamma_z$ such that $l \notin A_{\text{0cost}}$} 
        \State Remove link $l$ from outgoing link set of node $z$: $\Gamma_{z} \leftarrow \Gamma_z \setminus \{ l \}$
        \If{$l$ is a physical link}
            \State Add copy of link $l$ as an outgoing link for each child node $z_p$ and $z_a$:
            \State $\qquad \Gamma_{z_p} \leftarrow \Gamma_{z_p} \cup \{ l \}, \quad \Gamma_{z_a} \leftarrow \Gamma_{z_a} \cup \{ l \}$
                
        \ElsIf {$l$ is an artificial link}
            \State Add copy of link $l$ as an outgoing link for the corresponding $z_p$ node:
            \State $\qquad \Gamma_{z_p} \leftarrow \Gamma_{z_p} \cup \{ l \}$
        \EndIf
    \EndFor
\EndFor
\State Remove all child nodes without any incoming or outgoing link $l \in A_t \setminus A_{\text{0ccost}}$.
\end{algorithmic}
\end{algorithm}

Figure \ref{fig:masterNetTransform}(b) shows how node $8$ is split into child nodes $8_a$ and $8_p$. Nodes with subscript $a$ can only have artificial link as incoming links and only physical link as outgoing links. On the other hand, nodes with subscript $p$ can only have physical link as incoming links and allow any link type as outgoing links. Hence, node $8_a$ does not include the original outgoing link $(8,11)$ since both its incoming and outgoing links cannot be artificial.  Next, we show that using standard label-correcting algorithms on the transformed network solves the constrained SP problem. 

\begin{lemma}
    All paths from an origin to a destination in the transformed network obtained from Algorithm~\ref{algo:netTransformation} satisfy the no-consecutive-artificial-link path constraint
    \label{lem:part1}
\end{lemma}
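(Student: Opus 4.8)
The statement reduces to a purely local property of the transformed network $G_t$, and the plan is to prove that property by tracing how Algorithm~\ref{algo:netTransformation} re-attaches each original link. A path contains two consecutive artificial links precisely when some node on it is entered by an artificial link and then left by an artificial link; the zero-cost links introduced by the transformation are never artificial, so they can never be one member of such a consecutive pair. Hence it suffices to establish the invariant that in $G_t$ no node simultaneously has an incoming artificial link and an outgoing artificial link --- equivalently, that the head of every artificial link of $G_t$ has no outgoing artificial link. The lemma then follows directly, since on any origin-to-destination path the common node of a consecutive pair of artificial links would violate the invariant.

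To prove the invariant I would enumerate the node types produced by Algorithm~\ref{algo:netTransformation} and inspect where the original links land. For a centroid $z$, the algorithm strips $z$ of all its non-zero-cost links and redistributes them: incoming physical and incoming artificial links move to $z_p$ and $z_a$ respectively, outgoing physical links are copied to both $z_p$ and $z_a$, and outgoing artificial links are copied only to $z_p$. Three subcases then settle at once. (a) The residual centroid node $z$ --- and the duplicates $z_o$, $z_d$ created when $z$ is both an origin and a destination --- are incident only to links in $A_{\text{0cost}}$, hence trivially satisfy the invariant. (b) A node $z_p$ has only physical (and zero-cost) incoming links, so no incoming artificial link. (c) A node $z_a$ has only physical (and zero-cost) outgoing links, so no outgoing artificial link. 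In particular a node that is at once a boundary node and a centroid --- such as node $8$ in Figure~\ref{fig:masterNetTransform} --- is split, so any artificial link into it ends at its $z_a$-copy, from which no artificial link departs. It remains to treat nodes that are not centroids, which Algorithm~\ref{algo:netTransformation} leaves untouched; here I would invoke the structure of the DSTAP master network, in which each artificial link joins a centroid to a boundary node, so that the head of an artificial link incident to a non-centroid node is the $z_a$-copy of a split centroid and the invariant is inherited from case (c). The final line of Algorithm~\ref{algo:netTransformation}, which deletes childless nodes, only removes nodes and so cannot create a violation.

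I expect the last step --- the non-centroid nodes --- to be the main obstacle, because making it rigorous requires pinning down exactly which pairs of nodes the master-network construction connects by an artificial link and checking that whenever an artificial link touches a non-centroid (necessarily boundary) node, the split of its centroid endpoint already routes it through a $z_a$ on the head side or a $z_p$ on the tail side. This is a property of how $G_{\m}$ is generated rather than of the transformation, so I would state it explicitly as a standing assumption (adding such a boundary node to $Z_{\m}$ before applying the transformation if necessary). The remaining work --- the bookkeeping of which child each relabelled link attaches to, and verifying that deleting childless nodes is harmless --- is routine once this case split is in place.
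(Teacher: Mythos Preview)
Your approach coincides with the paper's: the paper's entire proof is the single observation that the last loop of Algorithm~\ref{algo:netTransformation} does not attach outgoing artificial links to a child node $z_a$ that has an incoming artificial link, and your node-by-node invariant (cases (a)--(c)) is just an explicit unpacking of that same fact. Your identification of the non-centroid boundary-node case as the delicate step is well-founded and actually goes beyond what the paper addresses --- the paper's one-line argument tacitly confines attention to centroid nodes, so your proposal to enlarge $Z_{\m}$ to include the boundary nodes before applying the transformation is a genuine patch rather than an unnecessary complication.
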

\begin{proof}
    The proof follows from the design of transformed network where in its last step the algorithm does not add outgoing artificial links to a child node ($z_a$) that has an incoming artificial link.
\end{proof}


\begin{lemma}
    There exists a path in transformed network for every feasible path in the original network
    \label{lem:part2}
\end{lemma}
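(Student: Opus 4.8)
The plan is to take an arbitrary feasible path in the original master network and construct, explicitly, a route in the transformed network that connects the same origin/destination pair, visits the same non-centroid nodes, and uses the same copies of the original physical and artificial links, inserting only zero-cost connector links wherever the path meets a centroid. (Here ``feasible'' means a path obeying the no-consecutive-artificial-link constraint.) The structural fact I would lean on is that Algorithm~\ref{algo:netTransformation} never deletes a physical or artificial link and never touches a link with two non-centroid endpoints; it only relocates the centroid endpoint(s) of a link onto one of the children $z_p$ or $z_a$, and adds zero-cost links ($z\to z_p$, $z\to z_a$ at an origin; $z_p\to z$, $z_a\to z$ at a destination; the four links among $z_o,z_a,z_p,z_d$ when $z$ is both). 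In particular, every incoming physical link of a centroid is rerouted to $z_p$ and every incoming artificial link to $z_a$, while $z_p$ inherits copies of \emph{all} outgoing links of $z$ and $z_a$ inherits copies of only its outgoing physical links.

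First I would fix a feasible path $P = [r = v_0, v_1, \dots, v_k = s]$ with links $e_i = (v_{i-1},v_i)$ and build the transformed path $\widetilde P$ step by step. It begins at the origin representative of $r$ (the node $r$ itself, or $r_o$ when $r$ is both an origin and a destination) and follows the zero-cost link into $r_p$; it then traverses the copy of $e_1$, and, inductively, after traversing the copy of $e_i$ it lands on $v_i$ if $v_i$ is not a centroid, on $v_{i,p}$ if $v_i$ is a centroid reached by a physical $e_i$, and on $v_{i,a}$ if $v_i$ is a centroid reached by an artificial $e_i$ (precisely where the head of $e_i$ was relocated); finally, from $s$, $s_p$, or $s_a$ it appends the appropriate zero-cost connector to reach the destination representative ($s$ itself, or $s_d$). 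This describes a walk on links that all exist after the transformation, from the origin representative to the destination representative of the same OD pair.

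The one thing that genuinely needs checking — and, I expect, the only real content of the argument — is whether $\widetilde P$ can always make the transition at a centroid $v_i$ with $0 < i < k$, i.e.\ whether the copy of the next link $e_{i+1}$ is available from whichever child node was reached. If $e_i$ is physical we sit at $v_{i,p}$, which carries copies of all outgoing links of $v_i$, so $e_{i+1}$ is present regardless of its type. If $e_i$ is artificial we sit at $v_{i,a}$, which carries copies of only the outgoing physical links of $v_i$; but the no-consecutive-artificial-links property of $P$ forces $e_{i+1}$ to be physical in that case, so its copy is again present — this is the step where the hypothesis is used. The leading step out of $r$ is immediate since $r_p$ carries every outgoing link of $r$, and the terminal step into $s$ is immediate since both $s_p$ and $s_a$ have a zero-cost link to the destination representative. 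As a byproduct, since the only links added beyond copies of original links have zero cost, $\widetilde P$ has the same total cost as $P$, which is what is ultimately needed to conclude that a label-correcting shortest path in the transformed network realizes the constrained shortest path. The both-origin-and-destination centroids and the degenerate case $k=1$ are dispatched by the same case analysis and introduce no new difficulty.
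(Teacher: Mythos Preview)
Your proof is correct and follows essentially the same constructive approach as the paper's: both build, for a given feasible path in the original network, a corresponding path in the transformed network by replacing each link with its relocated copy and threading through the appropriate child nodes $z_p$ or $z_a$ at centroids. Your treatment is in fact more explicit than the paper's---particularly in isolating the intermediate-centroid transition as the only place where the no-consecutive-artificial-links hypothesis is actually invoked---but the underlying idea is the same.
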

\begin{proof}
    The proof is included in Appendix~\ref{appendix:proof}.
\end{proof}

From Lemma~\ref{lem:part1} we can argue that a shortest path found in transformed network is also a feasible path in the original network. Similarly, from Lemma~\ref{lem:part2} we can argue that a shortest path found in transformed network must be the shortest feasible path in the original network, because if there were any other shorter feasible path, then the same path would exist in the transformed network and would be found. Hence, solving the shortest path problem on the transformed network will provide the shortest path in the original master network that does not include two consecutive artificial links. 

The network transformation approach in Algorithm~\ref{algo:netTransformation} is general and can be applied to partitions with more than two subnetworks. Unfortunately, the proposed  transformation creates several additional nodes and links in the master network. In our experiments on large-scale networks, we observe that this increased network size can slow the computation of the shortest path by up to twice the amount relative to when path constraints are removed, which can be detrimental to the desired computational efficiency of DSTAP-Heuristic.  

\paragraph*{Approach 2: For two subnetworks only} The second approach designed for two-subnetwork partition addresses the computational-efficiency issue while preserving path constraints. This approach exploits the property that a shortest path for master network with two subnetworks can have at most 3 links, and simply adapts the one-to-all label-correcting algorithms for finding the shortest paths in master network by avoiding two consecutive artificial links. Assuming the origin is in subnetwork 1 and destinations are in subnetwork 2, the algorithm completes the search of a shortest path in the master network in three stages as follows:

\begin{enumerate}
    \item In stage 1 all outgoing artificial links from the origin are scanned and the cost labels of link's tail nodes are updated. Because an artificial link exists from origin to every boundary node in subnetwork 1, this stage updates the cost label of all boundary nodes in subnetwork 1. We note that if the origin is located at the boundary, this step does not scan the physical links that might connect the origin to boundary nodes of subnetwork 2.
    
    \item In stage 2, the cost labels of all boundary nodes are updated, considering only the connections formed using physical links. An iterative label-setting procedure is used that starts with all boundary nodes in subnetwork 1 as part of the scan-eligible list (SEL). In each iteration, the node with lowest cost label is removed from SEL and all its outgoing physical links are scanned to add other potential nodes to SEL. Since all boundary nodes in subnetwork 2 are connected to a boundary node in subnetwork 1 through a physical link, this stage guarantees that cost labels of all boundary nodes will be updated. We note that it is possible for the label of a boundary node in subnetwork 1 to be modified in this stage if there exists an alternate shortest path using the physical links from another boundary node.
    
    \item Stage 3 enumerates paths to all destinations in subnetwork 2 by exploiting the property that path constraint violation can only occur in this stage. We enumerate the shortest path for each destination in subnetwork 2 as back node labels are not sufficient to construct the shortest path since Bellman's principle does not hold. Due to the property that the master net shortest path can have at most 3 links, storing explicit shortest paths does not add significant memory constraints.  In first part of this stage, paths to all interior destinations are enumerated by scanning all incoming artificial links to the destination. In second stage, the same process is repeated for all destinations at the boundary; however, now the destinations are scanned in increasing order of their cost labels. Destination nodes at boundary already have cost labels from stage 2 and this order of scanning ensures that we find a path that may include another incoming artificial link to the node. 
\end{enumerate}

Algorithm~\ref{algo:3StageDijkstra} provides the pseudocode for the 3-stage shortest path algorithm (termed \textit{3-stage-SPP}) on master network. For the network in Figure~\ref{fig:masterNetTransform}(a), the three stages proceed as follows: in stage 1, cost labels of nodes 1, 5, 6 are updated to 0,1, and 4, respectively; in stage 2, cost labels of nodes 5,6,7, and 8 are updated to 1,3,2, and 8 respectively, and in stage 3, paths to destination 11 is updated as [1,5,7,11] with a cost of 12 units and to destination 8 is updated as [1,5,7,8] with a cost of 4 units.

It is easy to verify that 3-stage-SPP algorithm indeed finds the optimal path with desired property of no-consecutive artificial links for two subnetwork partitions since it  accounts for artificial links only in first and third stages which explicitly captures the path constraint rule of no consecutive artificial links. Extending the algorithm for more than 2 partitions will involve more than three stages and is left as part of the future work.

\begin{algorithm}
\begin{algorithmic}[1]
\caption{One-to-all 3-stage-SPP algorithm for solving constrained shortest path problem on master network with two partitions}
\label{algo:3StageDijkstra}
\State Input: master network with physical and artificial links, associated link costs (given by LinkCost($i,j$) function for a link $(i,j)$), and an origin node $o$ (assumed to be in subnetwork 1)
\vspace{3mm}
\State \textbf{Initialization}: For the origin node, set cost label $c_o=0$. For all other nodes $i\in N_{\text{master}}\setminus \{o\}$, set $c_i=\infty$ and back node label $b_i = \text{null}$.
\vspace{3mm}
\State \textbf{Stage 1}: Update shortest cost labels for boundary nodes in subnetwork 1
\For{all outgoing artificial links $(o,i) \in \Gamma_o$}
    \If{$c_o + \text{LinkCost}(o,i) < c_i$}
        \State Set $c_i \leftarrow c_o + \text{LinkCost}(o,i)$ and $b_i \leftarrow o$
    \EndIf
\EndFor
\vspace{3mm}
\State \textbf{Stage 2}: Update shortest cost labels for all boundary nodes
    \State Initialize scan eligible list ($SEL$) to all boundary nodes in subnetwork 1
    \While{$SEL$ is not empty}
        \State Remove a node from $SEL$ with lowest cost label. Call it $n$.
        \For{all outgoing physical links $(n,i) \in \Gamma_n$}
            \If{$c_n + \text{LinkCost}(n,i) < c_i$}
                \State Set $c_i \leftarrow c_n + \text{LinkCost}(n,i)$ and $b_i \leftarrow n$
                \State $SEL \leftarrow SEL \cup \{ i\}$
            \EndIf
        \EndFor
    \EndWhile
\vspace{3mm}
\State \textbf{Stage 3}: Define shortest path to each destination in subnetwork 2
    \State Define empty shortest path variable  $\pi_d \leftarrow \text{null}$ for all destinations $d$ in subnetwork 2.
    \vspace{1mm}
    \State \textit{Stage 3a}: Update paths to interior destinations
    \For{all destination $d$ in subnetwork 2 that are not boundary nodes}
    \For{all incoming artificial links $(i,d) \in \Gamma_d^{-1}$}
    \If{$c_i + \text{LinkCost}(i,d) < c_d$}
        \State Set $c_d \leftarrow c_i + \text{LinkCost}(i,d)$
        \State Append node $d$ to path to $i$: $\pi_d \leftarrow \texttt{tracePathTo}(i) \oplus  d$
    \EndIf
    \EndFor
    \EndFor
    \vspace{1mm}
    \State \textit{Stage 3b}: Update paths to destinations that are at the boundary
    \For{all destinations $d$ in subnetwork 2 that are boundary nodes in increasing order of current cost labels}
    \For{all incoming artificial links $(i,d) \in \Gamma_d^{-1}$}
    \If{$c_i + \text{LinkCost}(i,d) < c_d$}
        \State Set $c_d \leftarrow c_i + \text{LinkCost}(i,d)$
        \State Append node $d$ to path to $i$: $\pi_d \leftarrow \texttt{tracePathTo}(i) \oplus  d$
    \EndIf
    \EndFor
    \EndFor
\end{algorithmic}
\end{algorithm}

Next, we determine the computational complexity of the 3-stage-SPP algorithm. If $b_i$, $o_i$, and $d_i$ are respectively the number of boundary nodes, origins, and destinations in subnetwork $i$, $i\in \{1,2\}$, the computational complexity of stage 1 is $\mathcal{O}(b_1)$, stage 2 is $\mathcal{O}\left( (b_1+b_2)^2 \right)$, and Stage 3 is $\mathcal{O}(d_2b_2 + b_2^2)$. Since megaregional networks have significantly higher number of destinations in a subnetwork than the number of boundary nodes, the algorithm has an overall complexity of $\mathcal{O}\left( d_2 b_2\right)$. When repeated for all origins, the all-to-all shortest path routine for master network has the complexity $\mathcal{O}\left( o_1d_2 b_2 + o_2 d_1 b_1\right)$, which is bounded by $\mathcal{O}\left( |Z_m|^2|B|\right)$, where $Z_m$ is the set of zones in master network and $B$ is the set of all boundary nodes.


We also note that if the path constraints are not modeled, the procedure for DSTAP-Heuristic in Algorithm~\ref{algo:disAlgo} still holds, albeit it might result in a further suboptimal solution (since the proof in \cite{jafari2017decomposition} relies on the assumption that regional OD pair paths have alternating artificial and physical links.) If we apply standard one-to-all Dijkstra's algorithm to solve shortest paths in the master network, the overall computational complexity is $\mathcal{O}\left( |Z_m||N_m|^2\right)$, where $N_m$ is the set of all nodes in the master network. Since $|B|<<|N_m|$ and $|Z_m|<|N_m|$, the 3-stage-SPP algorithm is more efficient and saves additional computation time. In Section~\ref{sec:results} we quantify this savings for various networks. 


\subsection{Handling centroid connectors and network pruning}
Transportation networks consist of centroids (or zones) where trips originate and end. Commonly, it is assumed that first $\zeta$ nodes in the network are centroids and any link originating or terminating at those nodes is a centroid connector. If a partitioning algorithm finds a cut passing through a centroid connector, it can result in a disconnected OD pair which is undesirable. We circumvent this problem by first removing all centroids and centroid connectors from the network prior to the partitioning, then applying a partitioning algorithm on the modified network, and then reattaching the centroids to the appropriate nodes and partition. If during the reattachment process, a centroid is found connected to nodes in different subnetworks, copies of the centroid are created one for each subnetwork, to ensure the connectivity of OD pairs in the master network and subnetworks. Furthermore, to model the constraint that the centroids should only be at the ends of any path in a network, we set the first through node for the masternetwork and subnetworks to be the same value as the complete network, which is equal to $\zeta+1$ by the convention in \cite{tntp}.

In addition to potential duplication of centroids, we note that for partitioning using Spectral and METISv1 algorithms, we drop the links that have zero flow. This is consistent with the design of METISv1~\citep{karypis1998fast} and Spectral~\citep{bell2017investigating,yahia2018network} algorithms to prevent finding a cut that passes through zero flow links as this often results in an imbalanced cut.  For transportation networks, zero flow links are a result of unused portions of network that are not used by a traveler. Dropping the zero flow links might create disconnected portions of network. In our analysis, we consider the largest connected portion for further partitioning and drop the other portion. This dropping of nodes and links can thus result in DSTAP-Heuristic working on a smaller portion of the network than the original network; however, there is no loss of accuracy as the TAP can be solved separately on other smaller disconnected portions.

\subsection{Best possible gap achieved using DSTAP-Heuristic}
The performance of DSTAP-Heuristic depends on the choice of the partition and whether there is ``significant" proportion of demand leaving a subnetwork and entering back again. 

As stated earlier, the value of $\psi-$statistic plays a role in the quality of partition. If we are able to find a partition with $\psi=0$, then DSTAP-heuristic (as described in Section~\ref{subsec:dstapvsheuristic}) converges to the optimal within a threshold as argued in the original DSTAP work. Unfortunately, if we relax path constraints in the master network or have a partition where $\psi>0$, then convergence is not guaranteed. 

 In our experiments, we found that DSTAP-Heuristic shows initial convergence, though the relative gap begins to oscillate as the iterations are continued. Given its heuristic nature, we report the best possible relative gap for DSTAP-Heuristic obtained across all iterations. The value of best possible relative gap depends on (a) the number of cut links/boundary nodes, and (b) the value of $\psi$. In the next section, we demonstrate that despite its heuristic nature and lack of theoretical guarantees, DSTAP-Heuristic is a useful algorithm for improving computational efficiencies of solving the TAP.
 

\section{Results}
\label{sec:results}

We conduct experiments on six mid- to large-scale test networks obtained from the transportation networks test problems repository~\citep{tntp} including Berlin Center, Austin, Goldcoast, Philadelphia, Chicago Regional, and Texas\footnote{The Texas network was obtained from the statewide regional model and is the largest network among all instances with 44798 nodes, 122944 links, and 4667 zones. Other networks statistics are provided in the Appendix~\ref{appendix:partition_stats}.}. All experiments are conducted on a Linux machine Ubuntu 18.04, 32 GB of memory and Intel i5 processor \@ 3.30 GHz. 

First, we report the benefits of using DSTAP-Heuristic over DSTAP.  Table \ref{tab:dstapVSheuristic} shows the number of additional nodes and links generated by DSTAP and DSTAP-Heuristic procedures for different networks partitioned using a given partitioning heuristic.\footnote{As explained in Section~\ref{sec:warmstarting} all partitioning heuristics performed relatively similarly across the test networks. In this section, we report results for the heuristic that took the least time in solving the network to a relative gap of 1E$-4$ after warmstarting DSTAP using the solution obtained from one run of DSTAP-Heuristic.} For each subnetwork and the master network, we report the total number of nodes, number of physical links (ones that are present in the complete network before partitioning), and the number of artificial links generated to model interactions between interior and boundary nodes and to model subnetwork interactions. As observed, modeling subnetwork interaction in DSTAP leads to creation of  0.8\%--2.4\% additional nodes and  1.9\%--21.4\% additional SALs. Each additional SAL has an associated OD pair in the other subnetwork, and thus leads to creation of up to 3.5\% additional origins requiring additional shortest path computation. 
By dropping the SALs, the network in DSTAP-Heuristic is simplified, allowing for computational efficiency over the centralized algorithm that we discuss next. The master networks in both DSTAP and DSTAP-Heuristic are identical with up to 1650 times more artificial links than physical links. However, as discussed in Section~\ref{subsec:dstapvsheuristic}, master network artificial links are essential for the DSTAP-Heuristic process and with an appropriate choice of the shortest path routine, these links do not add significant burden to computations. 


\begin{table}[h]
\centering
\caption{Comparison of number of SALs generated by DSTAP and DSTAP-Heuristic. In DSTAP-Heuristic the number of artificial links are simply dropped. The three-tuple values indicate number of nodes, number of physical links, and number of artificial links.}
\label{tab:dstapVSheuristic}
\begin{tabular}{|c|c|c|}
\hline
\textbf{Network} & \textbf{DSTAP} & \textbf{DSTAP-Heuristic} \\ \hline
Austin & \begin{tabular}[c]{@{}c@{}}Masternet- (1094, 81, 74079)\\ Subnetwork 1- (3777, 9242, 1441)\\ Subnetwork 2- (3867, 9376, 1510)\end{tabular} & \begin{tabular}[c]{@{}c@{}}Masternet- (1094, 81, 74079)\\ Subnetwork 1- (3685, 9242, 0)\\ Subnetwork 2- (3779, 9376, 0)\end{tabular} \\ \hline
 Berlin & \begin{tabular}[c]{@{}c@{}}Masternet- (637, 113, 55416)\\ Subnetwork 1- (6554, 13527, 2466)\\ Subnetwork 2- (6681, 14730, 2413)\end{tabular} & \begin{tabular}[c]{@{}c@{}}Masternet- (637, 113, 55416)\\ Subnetwork 1- (6432, 13527, 0)\\ Subnetwork 2- (6549, 14730, 0)\end{tabular}\\ \hline
 Goldcoast & \begin{tabular}[c]{@{}c@{}}Masternet- (1089, 20, 20756)\\ Subnetwork 1- (2218, 5108, 100)\\ Subnetwork 2- (2609, 6008, 112)\end{tabular} & \begin{tabular}[c]{@{}c@{}}Masternet- (1089, 20, 20756)\\ Subnetwork 1- (2194, 5108, 0)\\ Subnetwork 2- (2587, 6008, 0)\end{tabular}\\ \hline
 Chicago & \begin{tabular}[c]{@{}c@{}}Masternet- (1919, 159, 262868)\\ Subnetwork 1- (6509, 19229, 4673)\\ Subnetwork 2- (6776, 19630, 5351)\end{tabular} & \begin{tabular}[c]{@{}c@{}}Masternet- (1919, 159, 262868)\\ Subnetwork 1- (6351, 19229, 0)\\ Subnetwork 2- (6628, 19630, 0)\end{tabular}\\ \hline
 Philadelphia & \begin{tabular}[c]{@{}c@{}}Masternet- (1623, 158, 223258)\\ Subnetwork 1- (6874, 20606, 5811)\\ Subnetwork 2- (6819, 19239, 4950)\end{tabular} & \begin{tabular}[c]{@{}c@{}}Masternet- (1623, 158, 223258)\\ Subnetwork 1- (6728, 20606, 0)\\ Subnetwork 2- (6661, 19239, 0)\end{tabular}\\ \hline
Texas & \begin{tabular}[c]{@{}c@{}}Masternet- (1729, 192, 207593)\\ Subnetwork 1- (22467, 61780, 7998)\\ Subnetwork 2- (22599, 60520, 7640)\end{tabular} & \begin{tabular}[c]{@{}c@{}}Masternet- (1729, 192, 207593)\\ Subnetwork 1- (22293, 61780, 0)\\ Subnetwork 2- (22421, 60520, 0)\end{tabular} \\ \hline
\end{tabular}
\end{table}

Next, we demonstrate the computational advantage of using the 3-stage-SPP algorithm relative to solving Dijkstra's algorithm on the entire Master network. Figure~\ref{fig:3-stage-SPP-comparison} shows the time it took to solve master network using the 3-stage-SPP algorithm and solving Dijkstra's on the entire master network. 
As observed, the 3-stage-SPP algorithm is up to 10 times more efficient than solving Dijkstra's algorithm on the entire network because it exploits the layout of master network where artificial links only appear in first and third stages.

\begin{figure}[H]
\centering
\includegraphics[scale=0.6]{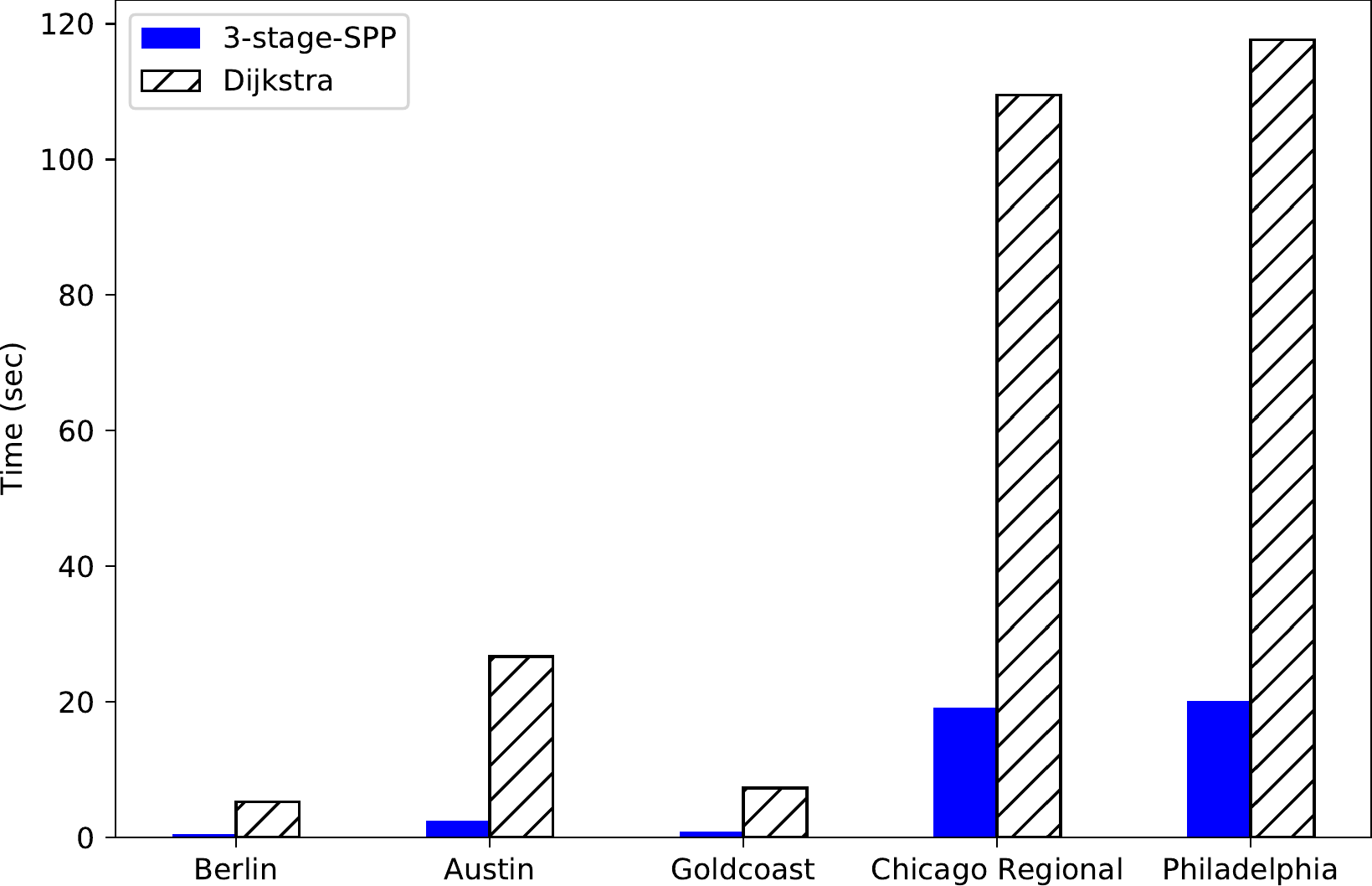}
\caption{Comparison of computation time to solve master network using two different shortest path routines: the 3-stage-SPP algorithm and standard Dijkstra's algorithm} 
\label{fig:3-stage-SPP-comparison}
\end{figure}

Next, we compare the computation times obtained from DSTAP-Heuristic relative to the centralized approach for different test networks, as shown in Table~\ref{tab:allCompTime}. Column 2 shows the lowest relative gap obtained from running DSTAP-Heuristic and the partition obtaining that gap. Columns 3 and 4 show the computation time taken by DSTAP-Heuristic and the centralized algorithm to reach that gap (using the gradient projection algorithm as the TAP solver for all cases).

\begin{table}[H]
\centering
\caption{Comparison of computation time for generating solutions up to a certain relative gap using DSTAP-Heuristic vs gradient projection (GP)}
\label{tab:allCompTime}
\begin{tabular}{|c|c|c|c|c|}
\hline
\multicolumn{2}{|c|}{} & \multicolumn{3}{c|}{\textbf{Computation time till best gap (sec)}} \\ \hline
\textbf{Network} & \textbf{\begin{tabular}[c]{@{}c@{}}Best gap\\ DSTAP-Heuristic\end{tabular}} & \textbf{\begin{tabular}[c]{@{}c@{}}DSTAP-Heuristic\\ using GP\end{tabular}} & \textbf{\begin{tabular}[c]{@{}c@{}}Centralized\\ using GP\end{tabular}} & \textbf{\%-savings} \\ \hline
\textbf{Berlin} & 0.0273 (METISv2) & \textbf{18.21} & 41.88 & 56.5\% \\ \hline
\textbf{Austin} & 0.1712 ($\psi$-FM METISv2) & \textbf{48.61} & 86.09 & 43.5\% \\ \hline
\textbf{Goldcoast} & 0.0608 (Spectral) & \textbf{47.94} & 100.92 & 52.5\% \\ \hline
\textbf{Chicago} & 0.0744 (METISv2) & \textbf{231.87} & 439.76 & 47.3\% \\ \hline
\textbf{Philadelphia} & 0.7226 ($\psi$-FM METISv2) & \textbf{241.09} & 284.11 & 15.1\% \\ \hline
\textbf{Texas} & 0.0175 (METISv2) & \textbf{1464.43} & 4551.87 & 67.8\% \\ \hline
\end{tabular}
\end{table}

We make following key observations:
\begin{itemize}
	\item First, the best-possible achieved gap using the DSTAP-Heuristic depends on the choice of partitioning algorithm and varies in the range of 0.02--0.72 for various networks. Due to its heuristic nature, DSTAP-Heuristic is not guaranteed to monotonically reduce the relative gap from one iteration to the next.
	\item We observe that, with the exception of Goldcoast network, METISv2 partitions or its variant obtained using $\psi-$FM refinement did the best in obtaining the lowest gap after an iteration of DSTAP-Heuristic relative to other partitioning algorithms. METISv2 simply relies on the indegree and outdegree of network nodes and generates a balanced partition relative to the ones generated by other algorithms. METISv2 is also advantageous over Spectral and METISv1 partitions since it does not rely on prior knowledge of equilibrium flows that are sometimes unavailable. We also observe that the $\psi$-FM refinement does better than the partitions without this refinement for 2 out of the 6 tested networks.
	\item For all network instances, DSTAP-Heuristic took lesser time than the centralized approach (solved to same relative gap as the best possible achieved gap by DSTAP-Heuristic) generating a percent time savings ranging between 15.1--67.8\%. This demonstrates the usefulness of DSTAP-Heuristic to obtain efficient approximation to TAP solutions than using a centralized approach.
\end{itemize}

Figure~\ref{fig:splitDSTAPTime} shows the split of computation times across four subroutines of DSTAP-Heuristic. The split is reported as a percent of the total time spent in (a) solving the master network to a prespecified relative gap of 0.05, (b) solving subnetworks in parallel to a relative gap of 0.05, (c) mapping the flow obtained on DSTAP-Heuristic network to the full network, and (d) computing full network relative gap. 
As expected, solving subnetworks in parallel takes a significant portion of the DSTAP-Heuristic time ($>55\%$ for all networks). Master network computations are relatively lower ($<15\%$), credited to the 3-stage-SPP algorithm. Computing the relative gap on the full network takes the second-highest time for all networks since solving full network gap requires solving shortest path on the full network, which is time-consuming. If full network relative gap computation is skipped (such as when the termination criterion is based on performing a fixed number of iterations), DSTAP-Heuristic will prove additionally useful. 

\begin{figure}[h]
\centering
\includegraphics[scale=0.5]{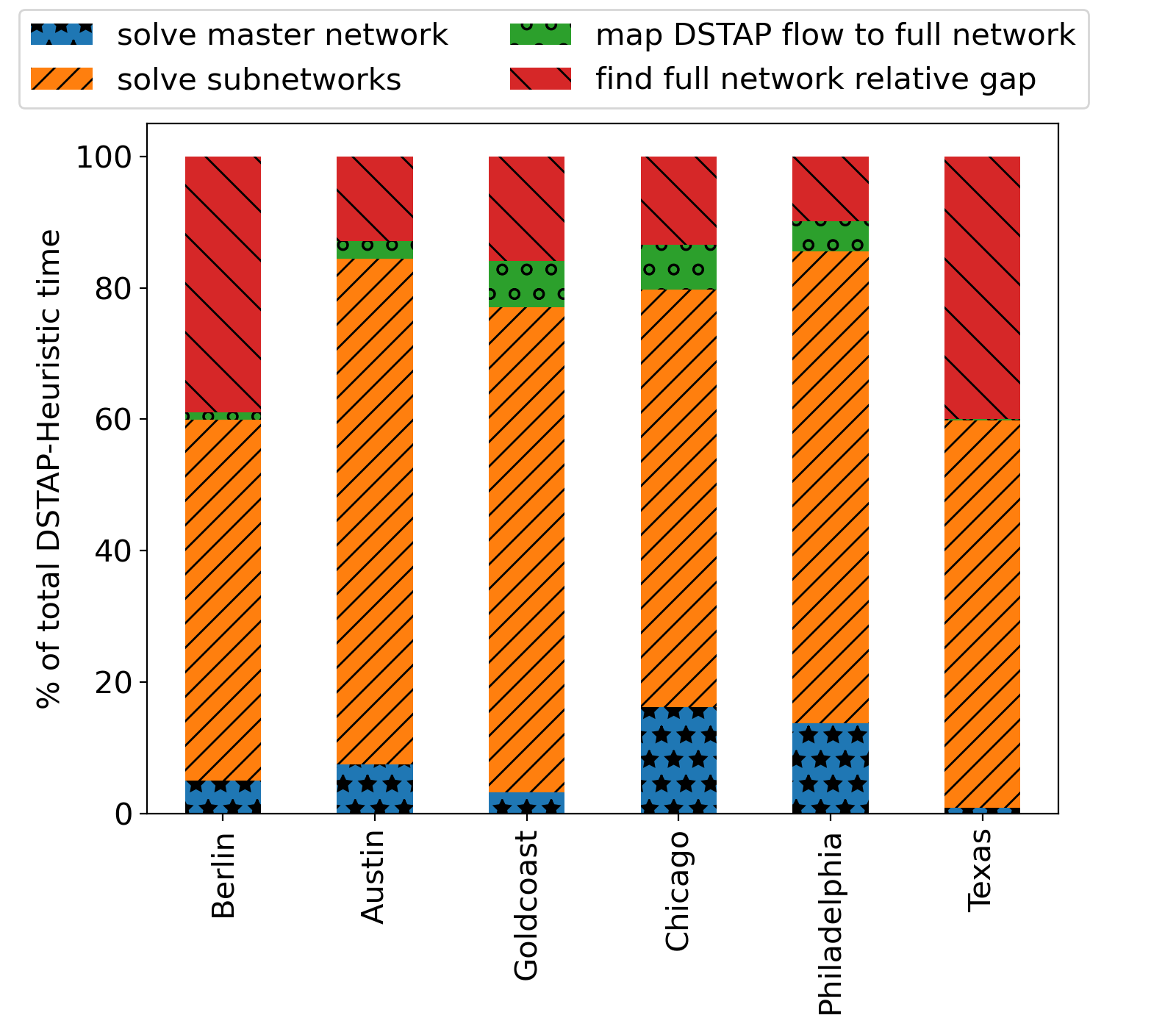}
\caption{Percent of total DSTAP Heuristic from Table~\ref{tab:allCompTime} that is spent in different subroutines} 
\label{fig:splitDSTAPTime}
\end{figure}

\begin{figure}[H]
\centering
\subfigure[Austin $\psi$-FM METISv2 partition]{%
\resizebox*{4.5cm}{!}{\includegraphics{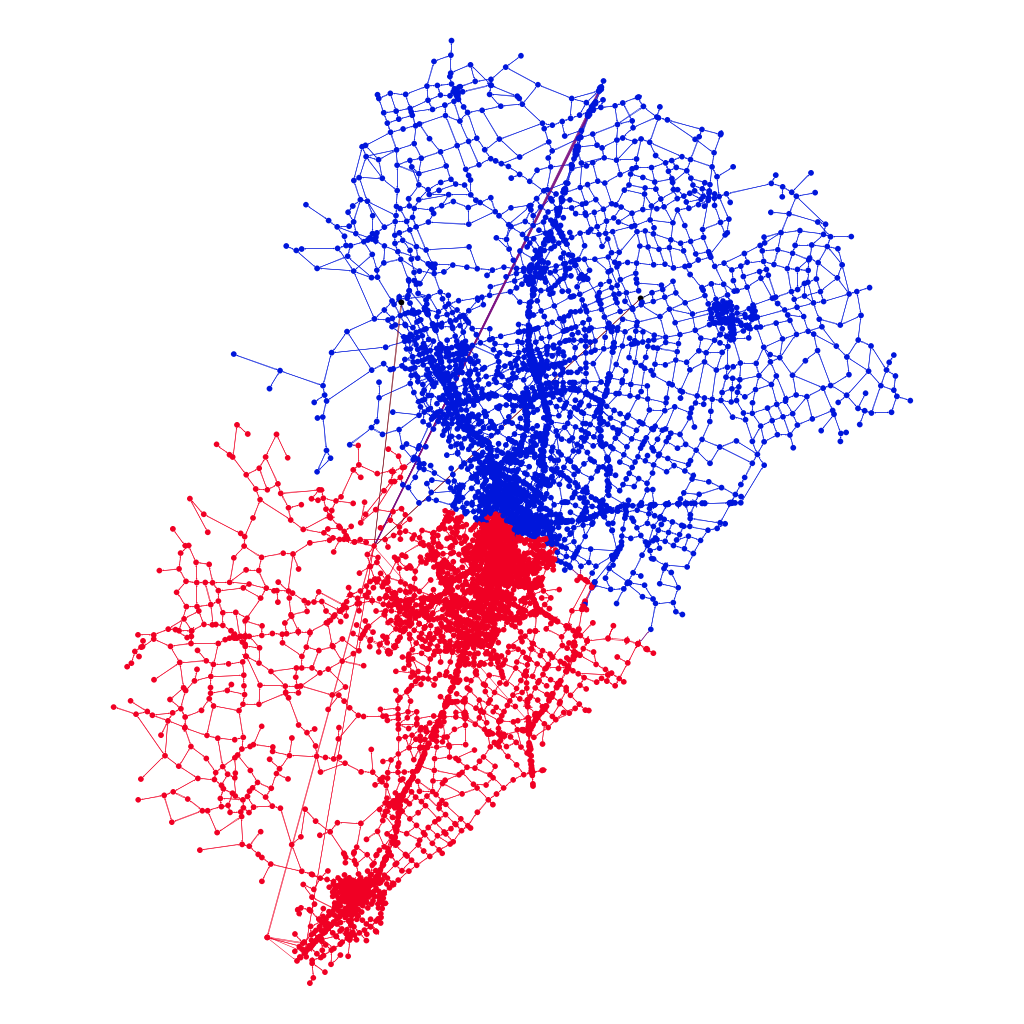}}}\hspace{1pt}
\subfigure[Berlin METISv2 partition]{%
\resizebox*{4.5cm}{!}{\includegraphics{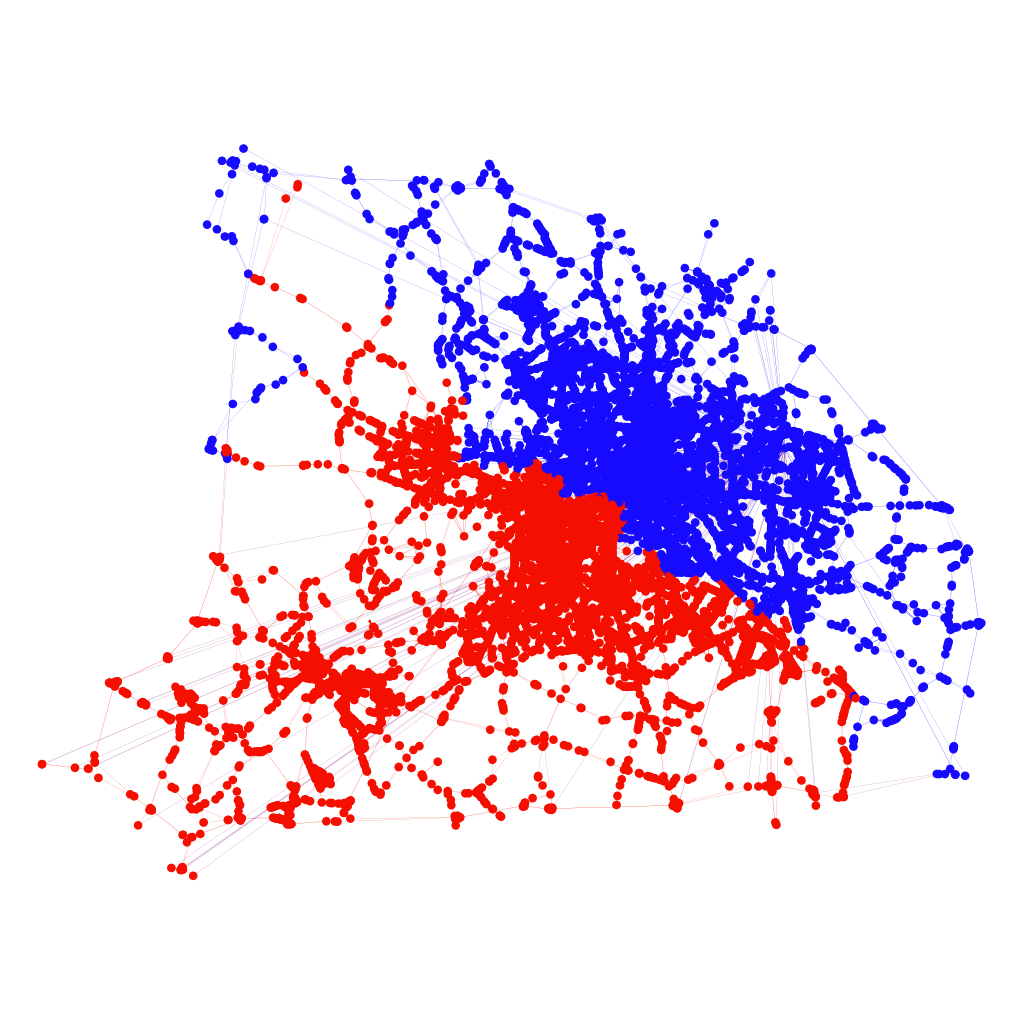}}}\hspace{1pt}
\subfigure[Goldcoast Spectral partition]{%
\resizebox*{4.5cm}{!}{\includegraphics{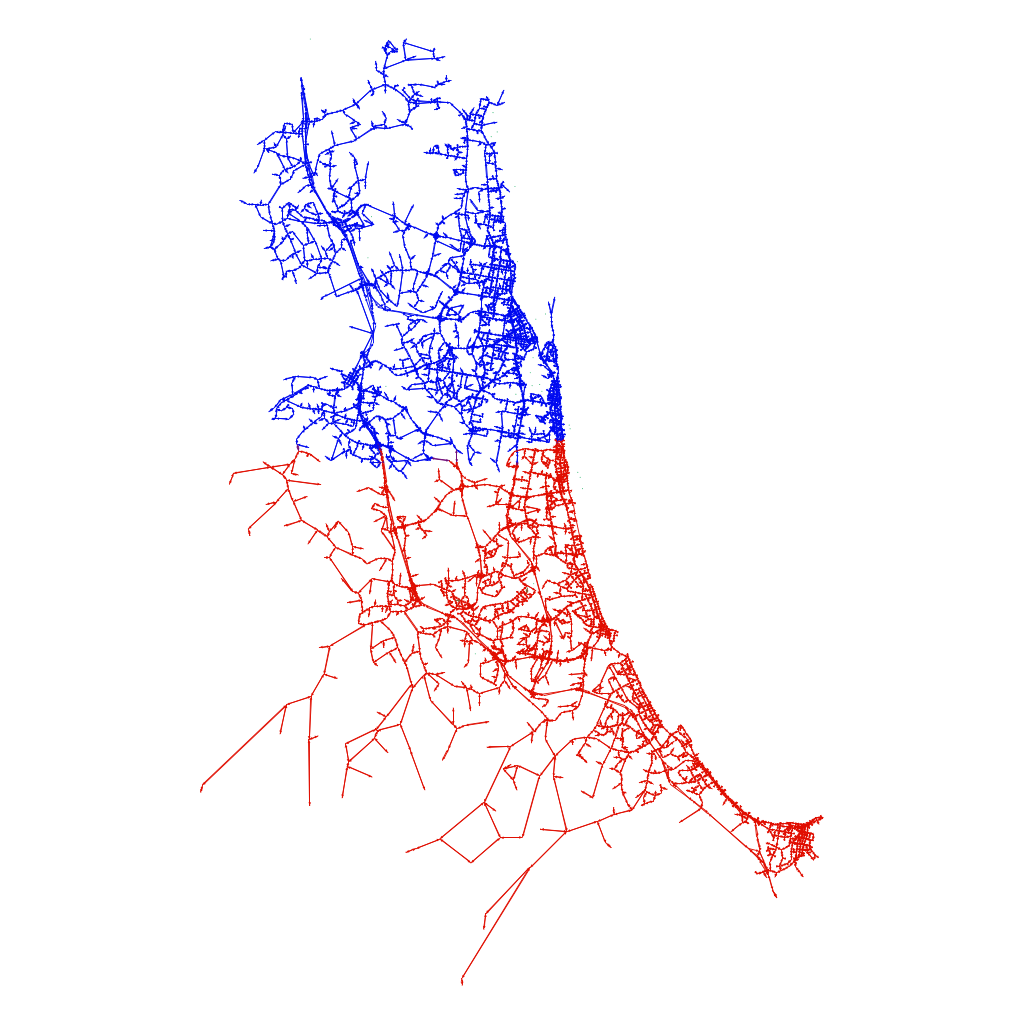}}}\hspace{1pt}
\subfigure[Chicago METISv2 partition]{%
\resizebox*{4.5cm}{!}{\includegraphics{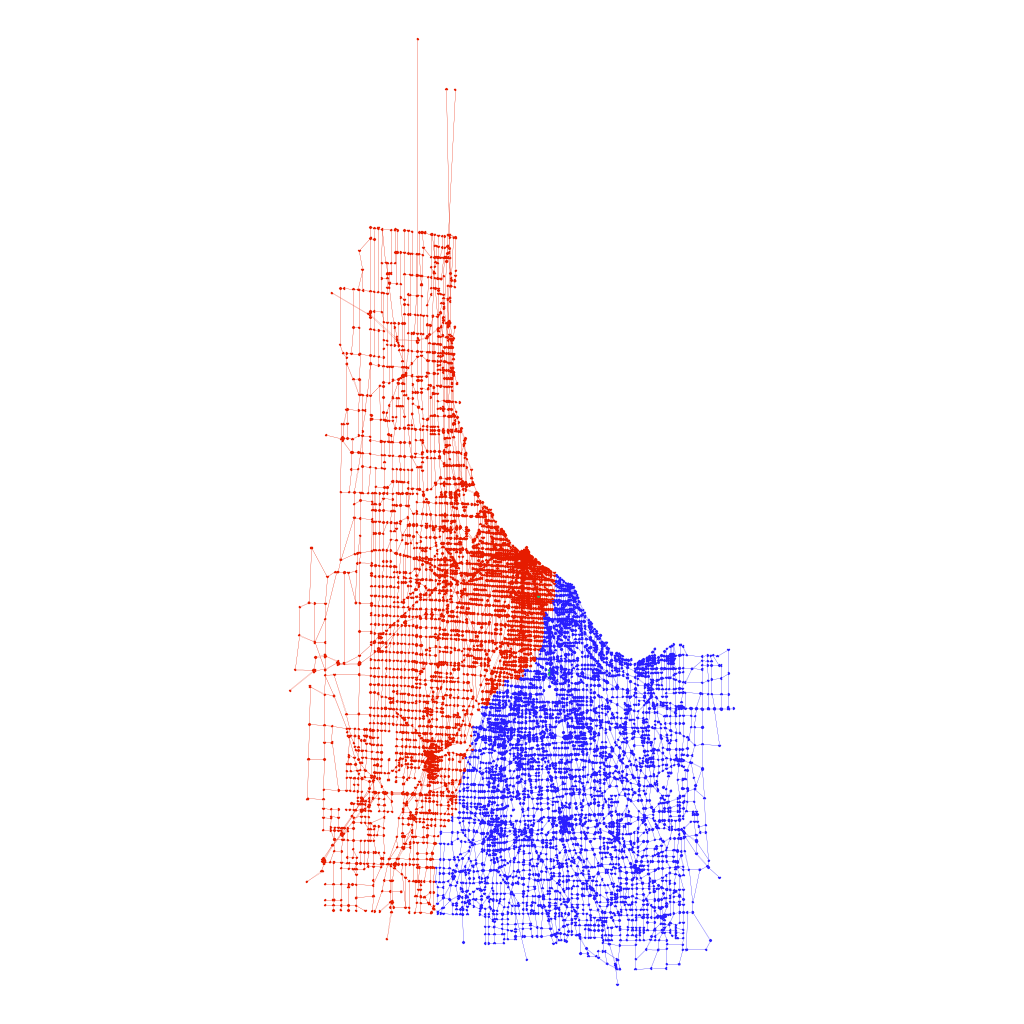}}}\hspace{1pt}
\subfigure[Philadelphia $\psi$-FM METISv2 partition]{%
\resizebox*{4.5cm}{!}{\includegraphics{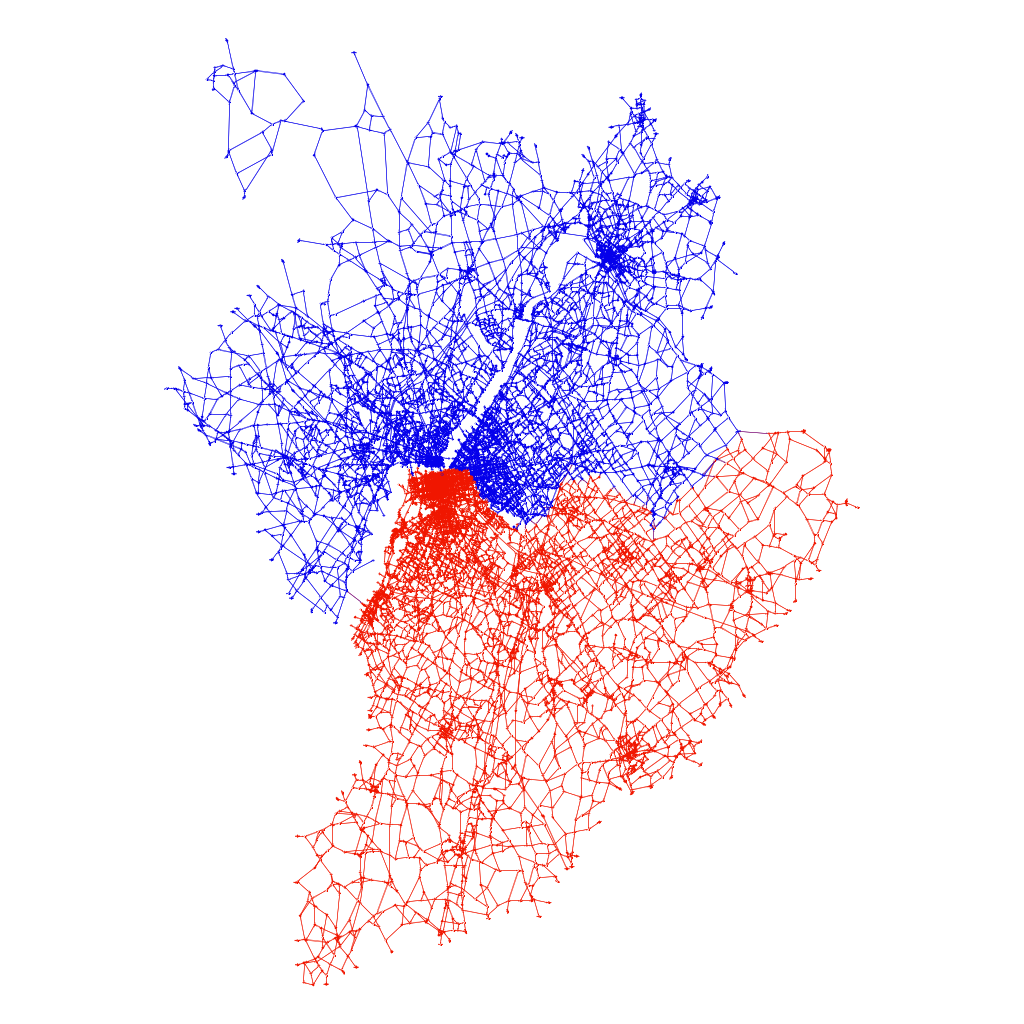}}}
 \subfigure[Texas METISv2 2-net partition]{%
 \resizebox*{4.5cm}{!}{\includegraphics{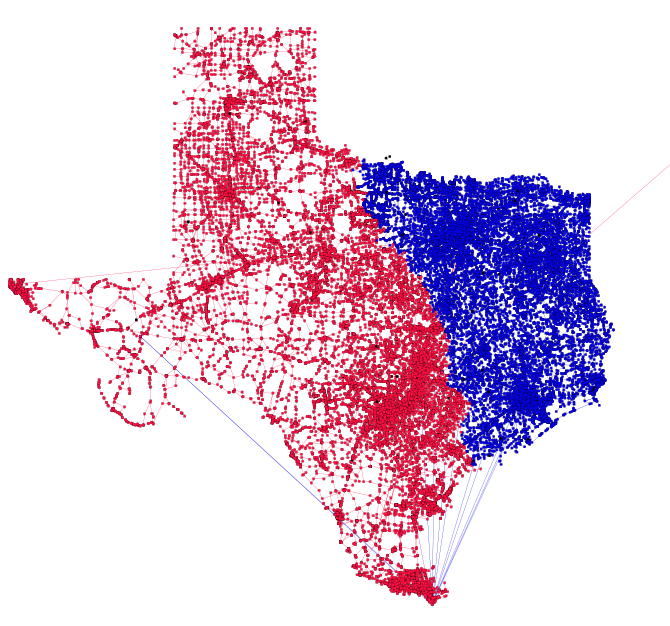}}}\hspace{1pt}
\caption{All network partitions that performed the best relative to the centralized gradient projection} \label{fig:allpart}
\end{figure}

Figure \ref{fig:allpart} shows the partitions on these networks. For all the network instances, the partitions generated are well-balanced and may explore the geographical features of the city. For example, the METISv2 partition for Austin network cuts bridges on the Colorado river for a portion of the partitioning line. Similarly, the Texas network is partitioned with each subnetwork containing the major cities (Dallas and Houston in blue partition, and Austin, San Antonio and El Paso in the red partition). For the Philadelphia network, while the Delaware river runs through the city, dividing the network just using the bridges as cut links would have resulted in imbalanced partition. Instead, $\psi$-FM METISv2 partition uses a portion of river as the boundary between subnetworks and the remainder of the boundary passes through the city. The partitions for Berlin, Goldcoast and Chicago Regional networks are standard balanced partitions without a clear geographical interpretation. While the best achieved relative gap is still high using these partitions is high, the computational advantage of DSTAP-Heuristic makes it suitable for obtaining an approximate TAP solution or for warmstarting a TAP using standard algorithms which we demonstrate next.


\subsection{Warm Starting TAP using DSTAP-Heuristic Solution}
\label{sec:warmstarting}
Due to its heuristic nature, DSTAP-Heuristic is not guaranteed to converge. In this section, we report improvements obtained from using DSTAP-Heuristic to warm start a traditional TAP algorithm (in our case, gradient projection). This warm starting guarantees convergence to the optimal solution and is faster than the traditional TAP algorithm for all tested networks. 

Figure~\ref{fig:all_net_warmsart_results} presents results for the set of experiments where warm-starting was implemented after one iteration of DSTAP-heuristic for the networks mentioned earlier.

First, as a general trend, we observe that  the proposed heuristic performs better than centralized gradient projection for all tested networks. However, there are partitions that take longer than the centralized algorithm. METISv2 did consistently better for all networks. 
Second, depending on the network characteristics, the performance of DSTAP-Heuristic may only be marginally better. The observed benefit narrows significantly for the Gold Coast and Philadelphia networks, where warm-starting using the heuristic performs marginally better than the centralized algorithm. 

\begin{figure}
\centering
\subfigure[Austin]{%
\resizebox*{7cm}{!}{\includegraphics{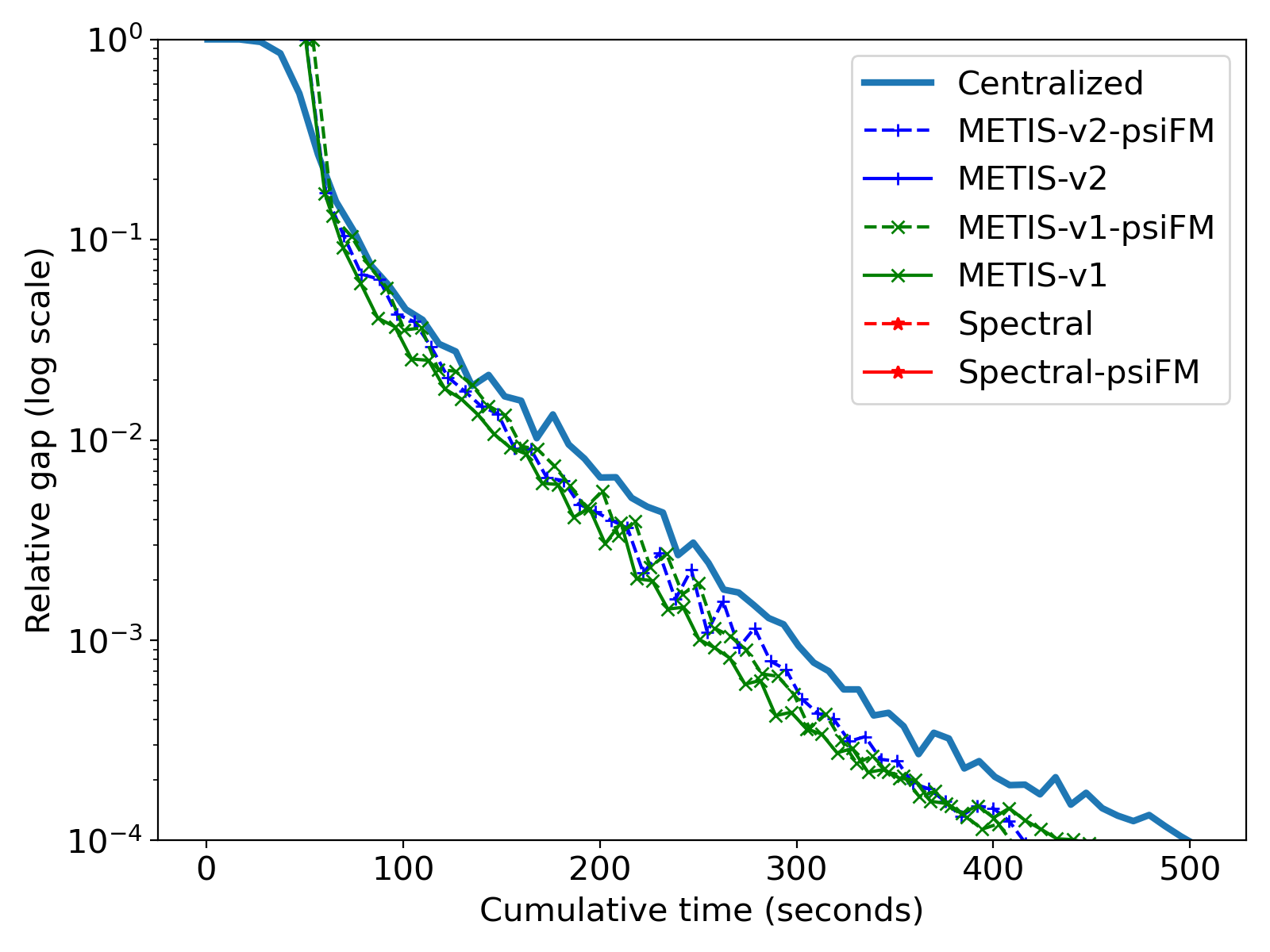}}}\hspace{1pt}
\subfigure[Berlin]{%
\resizebox*{7cm}{!}{\includegraphics{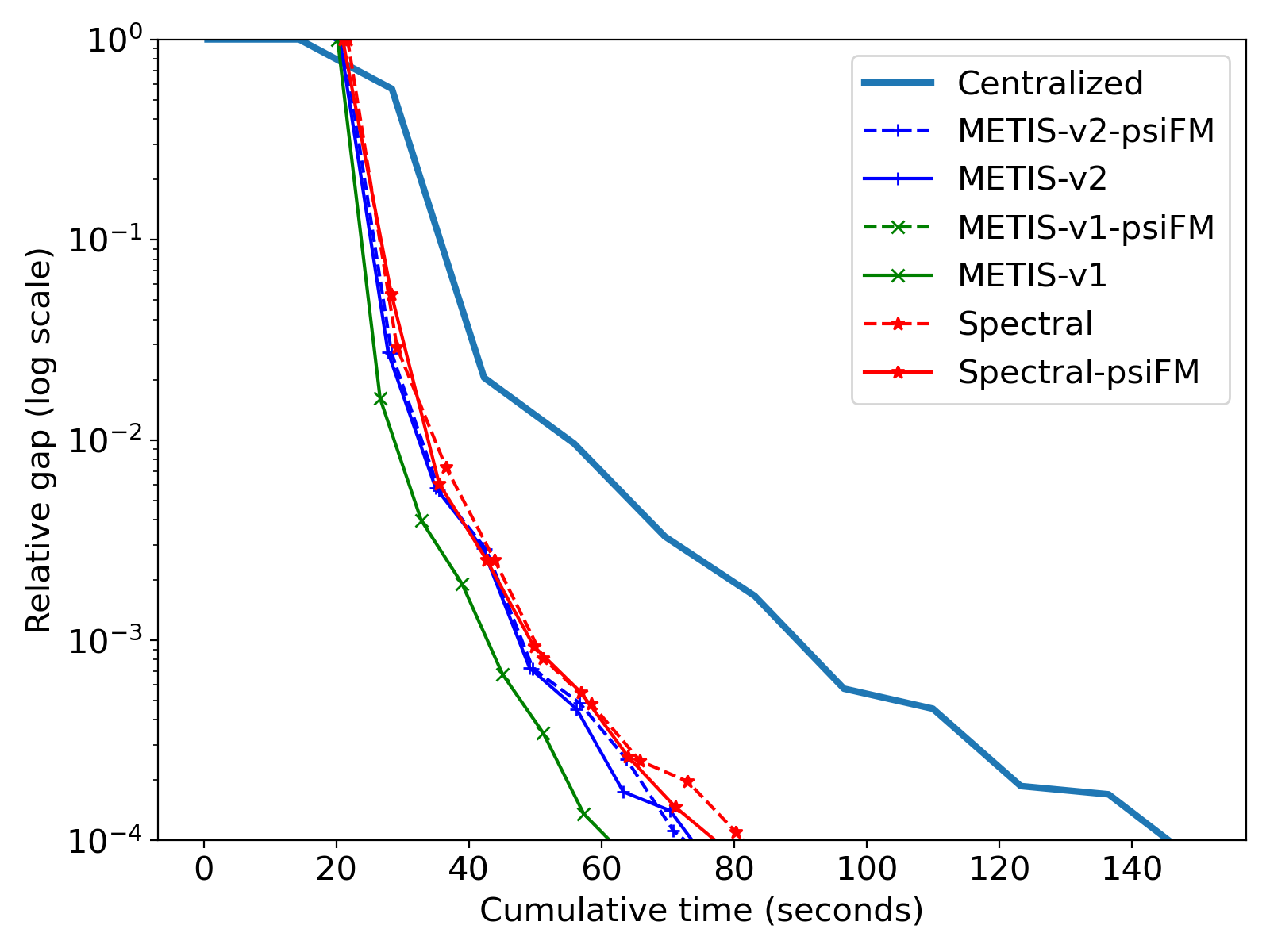}}}\hspace{1pt}
\subfigure[Goldcoast]{%
\resizebox*{7cm}{!}{\includegraphics{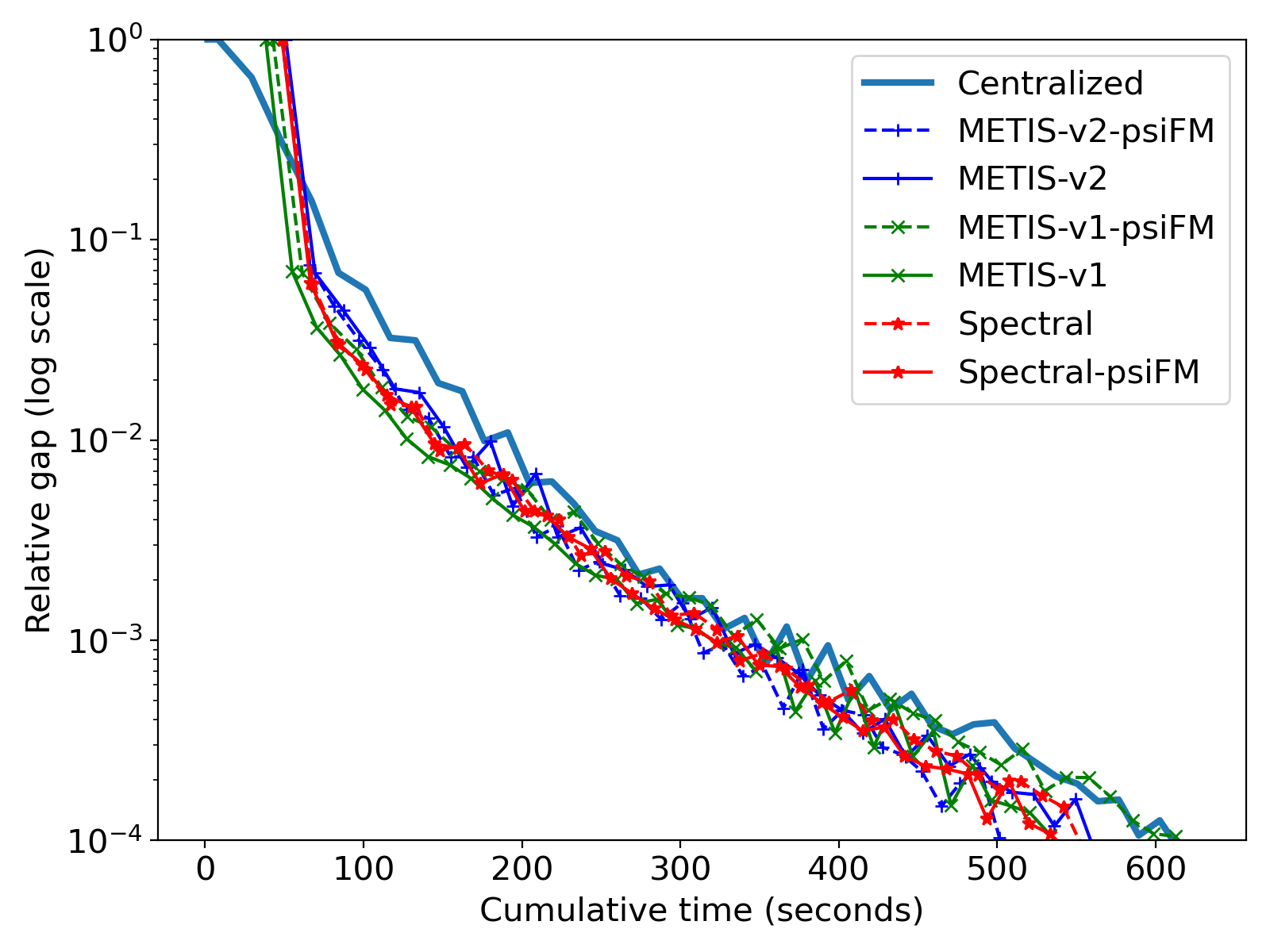}}}
\subfigure[Chicago]{%
\resizebox*{7cm}{!}{\includegraphics{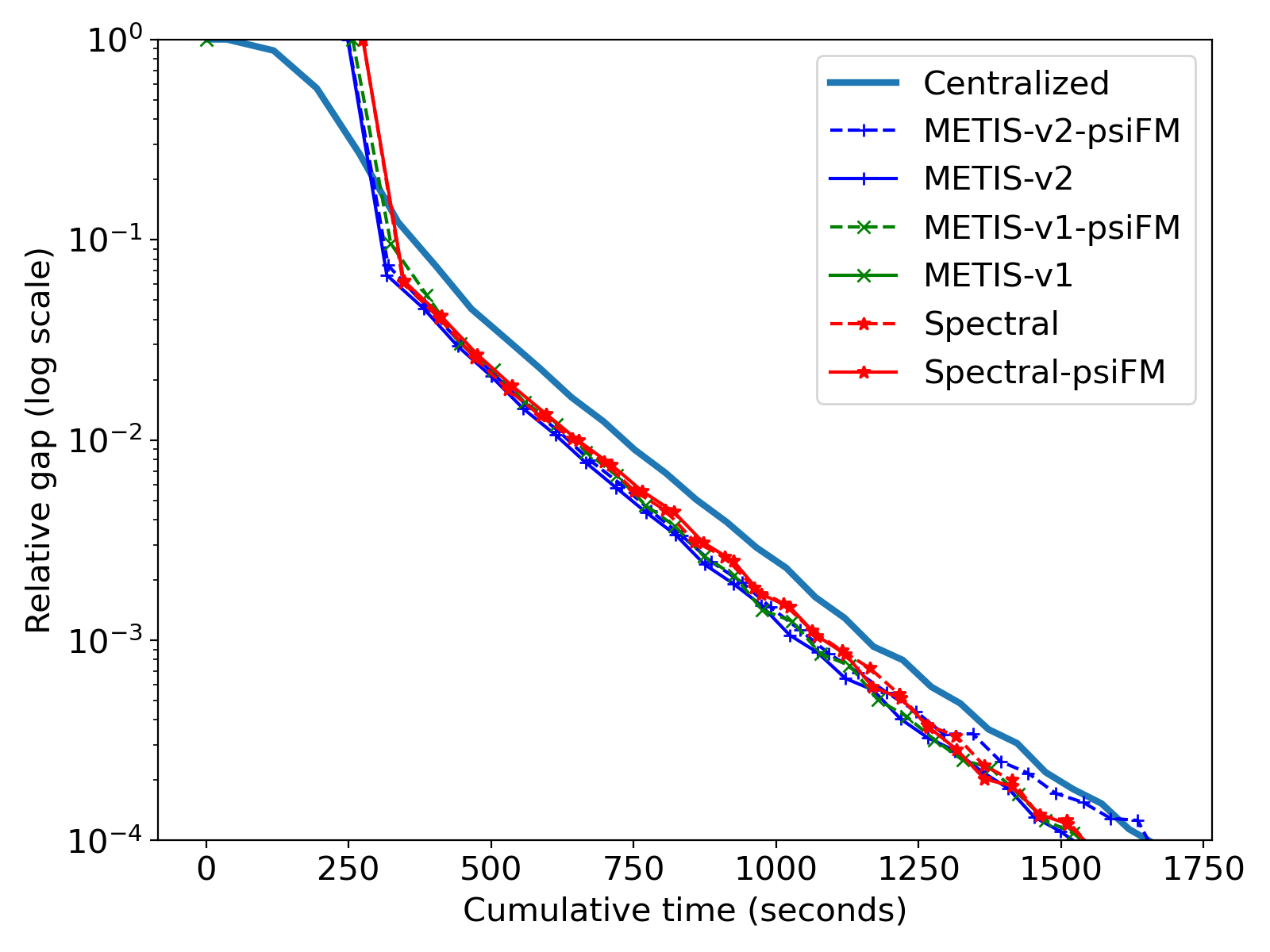}}}
\subfigure[Philadelphia]{%
\resizebox*{7cm}{!}{\includegraphics{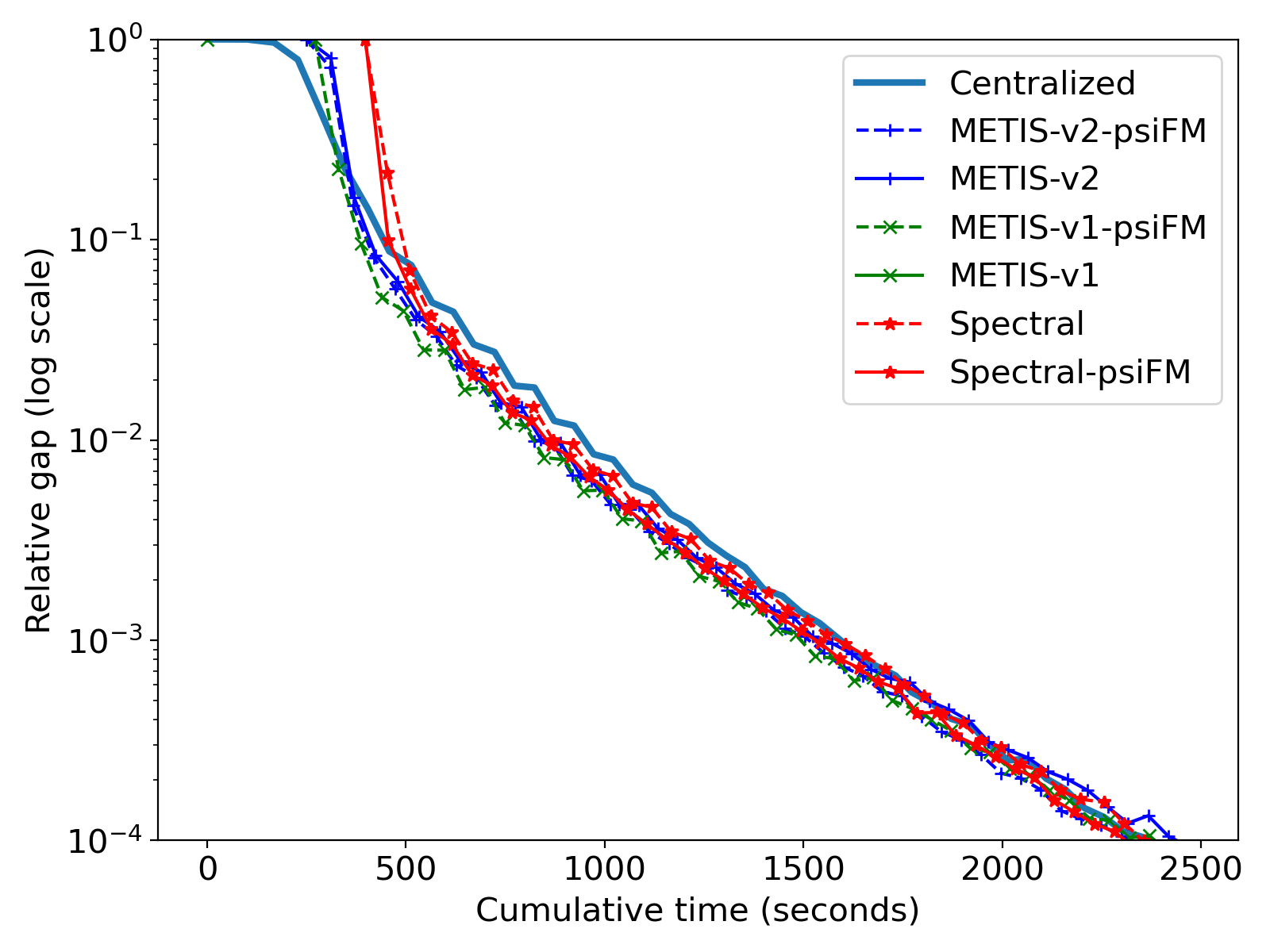}}}
\caption{Plots of relative gap as a function of computation time after warmstarting gradient projection using the path flows obtained after 1 iteration of DSTAP-Heuristic for various networks.} 
\label{fig:all_net_warmsart_results}
\end{figure}




Table~\ref{tab:demandVariants} presents the percent savings obtained from using DSTAP-Heuristic for warmstarting TAP for the best partition instance for three demand levels with scaling factors as 1.0 (base demand), 0.85 (low demand), and 1.5 (high demand). We do not generate new partitions for low and high demand scenarios to demonstrate the usefulness of partitions generated at base demand (demand factor 1.0). As observed, for all demand levels and for all networks, DSTAP-Heuristic generates computational savings between 2.0\%--47.05\% relative to centralized gradient projection. There is no clear evidence that increasing the demand increases the benefit of DSTAP-Heuristic; however, this may be due to the choice of same partition as base demand.

The application of proposed DSTAP-Heuristic as a warmstarting tool is highlighted by the results. Beyond the first iteration, the gap for warmstart and no-warmstart case reduce at similar rates due to usage of the same centralized algorithm. Therefore, the cause for the savings is isolated at warmstarting.  
The results provide evidence for usability of the proposed heuristic on large networks and the associated time saving, while overcoming the heuristic gap issue. The detailed numerical results and partition statistics can be found in Appendix \ref{appendix:partition_stats}. 

\begin{sidewaystable}
\centering
\caption{Comparison of computation time for centralized gradient projection when solved to a relative gap of 1E$-4$ without (``Centralized") and with (``DSTAP-Heuristic") warmstarting using the DSTAP-Heuristic algorithm}
\label{tab:demandVariants}
\begin{tabular}{ccccccc}
\hline
\multirow{2}{*}{\textbf{\begin{tabular}[c]{@{}c@{}}Demand\\ Factor\end{tabular}}} & \multirow{2}{*}{\textbf{\begin{tabular}[c]{@{}c@{}}Computation\\ time\end{tabular}}} & \multicolumn{5}{c}{\textbf{Network}} \\ \cline{3-7} 
 &  & \textbf{Austin} & \textbf{\begin{tabular}[c]{@{}c@{}}Berlin-\\ Center\end{tabular}} & \textbf{Goldcoast} & \textbf{\begin{tabular}[c]{@{}c@{}}Chicago\\ Regional\end{tabular}} & \textbf{Philadelphia} \\ \hline
\multirow{3}{*}{\textbf{0.85}} & \textbf{\begin{tabular}[c]{@{}c@{}}Centralized\\ time  (sec)\end{tabular}} & 442.64 & 119.88 & 409.73 & 1530.40 & 2883.65 \\ \cline{2-7} 
 & \textbf{\begin{tabular}[c]{@{}c@{}}DSTAP-Heuristic\\ time (sec)\end{tabular}} & 404.50 & 63.48 & 350.38 & 1165.89 & 2198.49 \\ \cline{2-7}  
 & \textbf{\% savings} & 8.62\% & 47.05\% & 14.49\% & 23.82\% & 23.76\% \\ \hline
\multirow{3}{*}{\textbf{1.0}} & \textbf{\begin{tabular}[c]{@{}c@{}}Centralized\\ time (sec)\end{tabular}} & 684.10 & 144.18 & 616.25 & 1912.71 & 2673.76 \\ \cline{2-7} 
 & \textbf{\begin{tabular}[c]{@{}c@{}}DSTAP-Heuristic\\ time (sec)\end{tabular}} & 425.55 & 78.29 & 555.08 & 1646.68 & 2357.19 \\ \cline{2-7} 
 & \textbf{\% savings} & 37.8\% & 45.70\% & 9.93\% & 13.91\% & 11.84\% \\ \hline
\multirow{3}{*}{\textbf{1.5}} & \textbf{\begin{tabular}[c]{@{}c@{}}Centralized\\ time (sec)\end{tabular}} & 869.42 & 182.9 & 1129.91 & 4752.40 & 4004.96 \\ \cline{2-7} 
 & \textbf{\begin{tabular}[c]{@{}c@{}}DSTAP-Heuristic\\ time (sec)\end{tabular}} & 687.49 & 102.09 & 1162.42 & 3649.99 & 3435.46 \\ \cline{2-7} 
 & \textbf{\% savings} & 20.93\% & 44.18\% & $-2.88$\% & 23.20\% & 14.22\% \\ \hline
\end{tabular}
\end{sidewaystable}





\subsection{Using DSTAP Heuristic for Megaregions}
Last, to demonstrate the usefulness of DSTAP-Heuristic for solving TAP on megaregions, we analyze the performance of the heuristic on the Texas  network which consists of major cities such as Austin, Dallas, Houston, and San Antonio. 

Megaregional networks commonly have higher density of links and nodes in the  city area, and are relatively sparse in other locations. An intuitive way to partition a megaregion would be to consider each city as its own subnetwork; this intuitive partitioning aligns with how the regional models are maintained, where the state handles solving TAP on a relative sparse statewide network, while the MPOs handle solving TAPs on a detailed citywide network. However, it is a difficult process to determine where the boundary of partitions should pass, and which cities should be grouped together, which is where the default graph partitioning algorithms can be used. Due to the superior performance of METISv2 partitions, we use the same for Texas megaregional network. Figure~\ref{fig:allpartTexas} shows the partitions generated using METISv2 algorithm for the Texas megaregion considering two and three subnetworks. 

\begin{figure}[H]
\centering
\subfigure[Texas METISv2 2-net partition]{%
\resizebox*{4.5cm}{!}{\includegraphics{Temp_plots/Texas_METISv2_partition_2subnet.png}}}\hspace{1pt}
\subfigure[Texas METISv2 3-net partition]{%
\resizebox*{4.5cm}{!}{\includegraphics{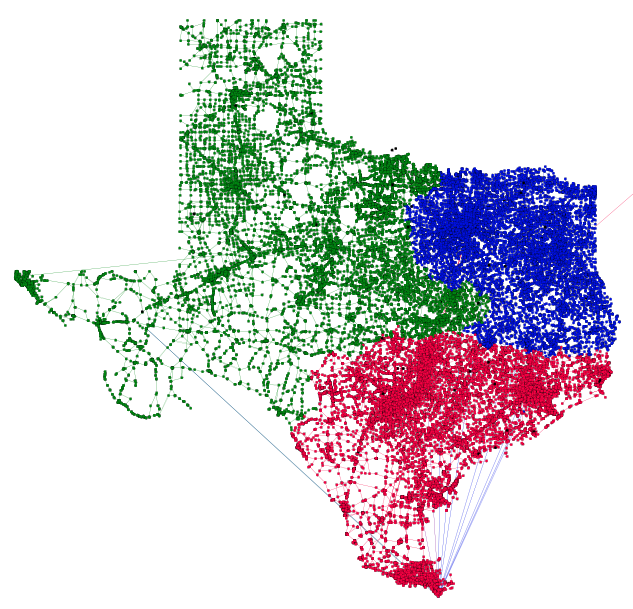}}}
\caption{Two- and Three-subnetwork partitions for Texas that performed the best relative to the centralized gradient projection} \label{fig:allpartTexas}
\end{figure}

For the Texas network, we run DSTAP-Heuristic for one iteration and use the obtained solution to warm start the gradient projection solver. We compare the computation time taken to solve the megaregion to a relative gap less than 1E$-4$ compared relative to the centralized method without warmstarting. We observe that the solution time for TAP using centralized gradient projection without warmstarting in 384 minutes. Whereas, if we allow warmstarting through DSTAP-Heuristic, then the solution time using two subnetworks is 269 minutes and using three subnetworks is 261 minutes. Thus, we save 30\% of computation time on average using DSTAP-heuristic to warm start the TAP for megaregions.

\begin{figure}[H]
\centering
\noindent\makebox[\textwidth]{%
\resizebox*{8.5cm}{!}{\includegraphics{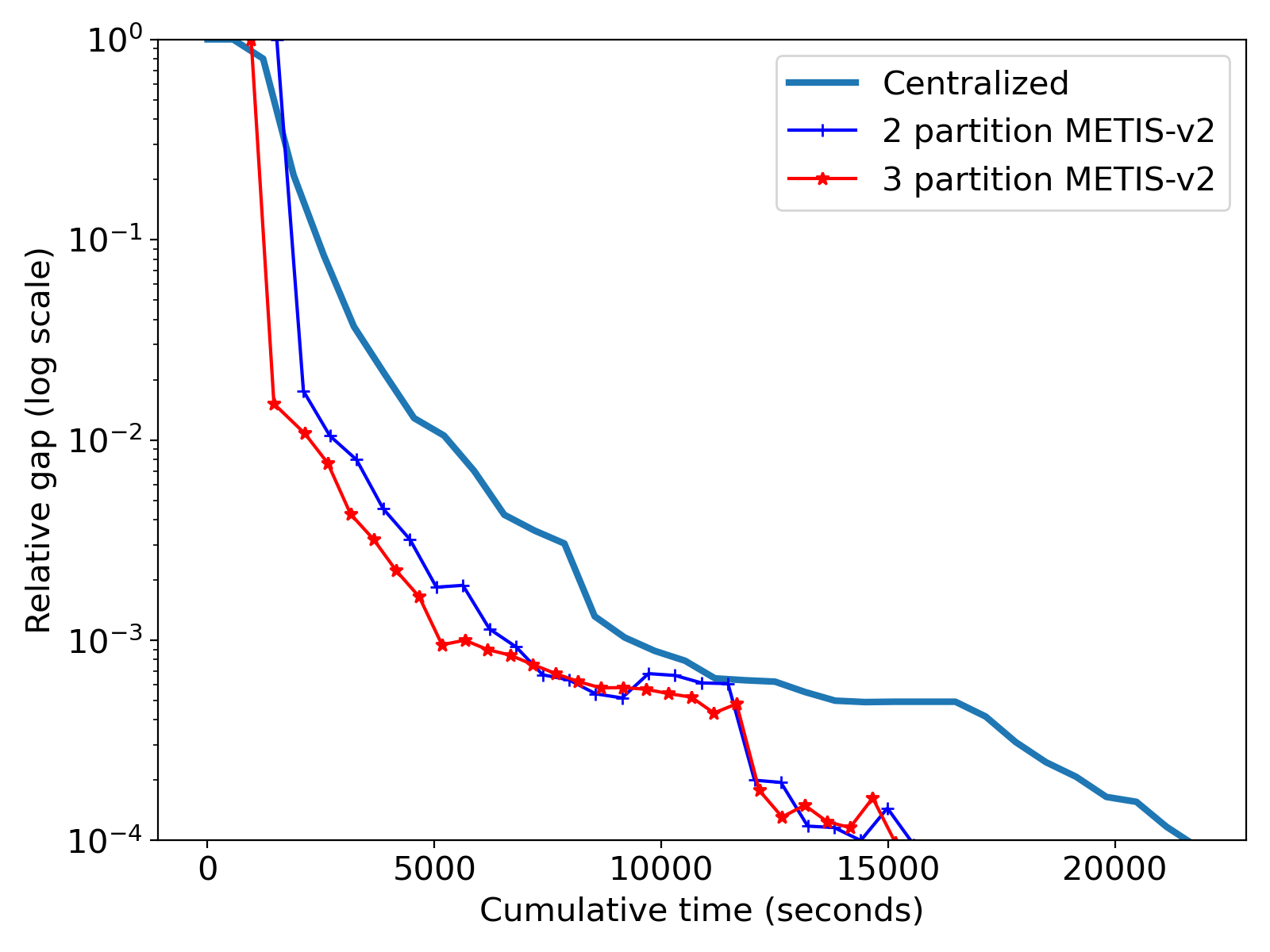}}}
\centering
\caption{Performance on the Texas network using METISv2 two and three subnetwork partitions }
\label{fig:Texas_res}
\centering
\end{figure}

\section{Conclusions}
\label{sec:conclusions}

Computational efficiency of TAP is an important concern as networks grow in size and resolution and for megaregional planning. This article proposes a network decomposition heuristic for solving TAP with associated network transformation for solving the constrained shortest path and an iterative refinement approach for network partitioning. The results compare the performance of the proposed heuristic and centralized gradient projection algorithm for multiple large-scale networks, showing the computational benefits of warmstarting using the proposed heuristic, with an average computational savings of 10–35\% over a TAP solver without
warmstarting. The computational benefit is observed for all tested networks and for different demand levels. Partitions generated by METISv2 algorithm or its variant obtained after $\psi$-FM refinement were found to achieve the lowest gap after an iteration of DSTAP-Heuristic.  Based on our findings, we recommend the use of DSTAP-Heuristic to warm start TAP solutions for finding efficient solutions in limited time. 

Various directions for future research can be identified. First, investigating newer network partitioning algorithms with additional metrics or explanatory variables can help improve the performance of DSTAP-Heuristic. This includes partitioning  based on latitude and longitudes of nodes in a regional network~\citep{schild2015balanced} and/or natural divisions, such as a river, that have demonstrated efficiency for solving shortest path on megaregional networks. Second, various component models for DSTAP-Heuristic can be enhanced based on latest advancements in the TAP literature. These include using contraction hierarchies~\citep{schneck2020accelerating}, using bush-based algorithms for solving TAP on master and subnetworks~\citep{bar2010traffic}, and incorporating other techniques of parallelization such as ones based on OD pair~\citep{chen1988parallel} or parallel block-coordinate-descent for gradient projection~\citep{chen2020parallel} in tandem with geographic partitions. Finally, the geographical partitioning methods can be extended for solving equilibrium on dynamic networks, which pose additional constraints due to time dependencies and modeling of queue spillbacks.




\section*{Acknowledgements}
Authors would like to thank Texas Department of Transportation for sharing the statewide model data that was used to create the Texas network used in this analysis. We also acknowledge the input and feedback from Steve Boyles on several earlier drafts of this article. Partial research by the first author was also supported by the Center for Advanced Transportation Mobility, University Transportation Center, and NSF Grant No. 2106989.






\bibliographystyle{tfcad}
\bibliography{interactcadsample}

\clearpage

\appendix

\section{Proof for Lemma~\ref{lem:part2}}
\label{appendix:proof}
\begin{proof}
First, we define a mapping relating every path in the transformed network to a path in the original network. Let $\Pi_{\text{transformed}}$ be set of all paths in the transformed network and $\Pi_{\text{original}}^{\text{feasible}}$ be the set of all feasible paths in the original network. We define a mapping $f:\Pi_{\text{transformed}} \rightarrow \Pi_{\text{original}}^{\text{feasible}}$ as follows:
    \begin{itemize}
        \item If the path passes through a node $z_o$, $z_d$, $z_p$ or $z_a$, replace the occurrence with the $z$
        \item If node $z$ has multiple consecutive occurrences, then replace it with one occurrence of $z$
    \end{itemize}

For example, a path $[1_o, 1_p, 2_a, 3, 4_p, 5_p, 6, 7_a, 7_d]$ will map to a path $[1,2,3,4,5,6,7]$ in the original network. We can now show that $f$ is a surjective mapping which completes the proof.

Let $[o,x^1,\ldots,x^n,d] \in \Pi_{\text{original}}^{\text{feasible}}$ be a feasible path representing an ordered list of nodes connecting origin zone $o$ to destination zone $d$. For the mapping $f$ to be surjective, we must be able to find a path $\pi \in \Pi_{\text{transformed}}$ that maps to this path. In a general case, this path may pass through other centroid nodes. Let's assume that all nodes $x^1,x^2,\ldots,x^n$ are centroids. Then, by the construction of network transformation, if the link connecting $(x^i,x^{i+1})$ is artificial, then we have replaced it with link $(x^i_p, x^{i+1}_a)$. On the other hand, if the link connecting $(x^i,x^{i+1})$ is a physical link, then we have replaced it with two links $(x^i_a, x^{i+1}_p)$ and $(x^i_p, x^{i+1}_p)$.   
This process applies for all links in the original network path, hence, there exists a path in the transformed network for every feasible path in the original network.
\end{proof}

    
    




\clearpage

\section{Detailed Partition Statistics}
\label{appendix:partition_stats}
Austin network results are shown in Table~\ref{tab:appendixAustinResult}.\footnote{The total number of links (or nodes) across different partitions along with the cut links (or boundary nodes) may not add up to the total in the network as certain nodes/links were dropped to ensure network connectivity and those nodes/links were not being used.}

\begin{table}[h]
\centering
\caption{Austin Network Partition}
\label{tab:appendixAustinResult}
\begin{tabular}{|c|c|c|c|}
\hline
\multicolumn{4}{|c|}{\cellcolor[HTML]{C0C0C0}\textbf{Austin network ($n=7466$, $m=18710$)}} \\ \hline
Partition & METISv1 & \begin{tabular}[c]{@{}c@{}}$\psi-$FM\\ METISv1\end{tabular} & \begin{tabular}[c]{@{}c@{}}$\psi-$FM\\ METISv2\end{tabular} \\ \hline
$n_1:n_2$ & 3796 : 3667 & 3799 : 3664 & 3685 : 3779 \\ \hline
$m_1:m_2$ & 10434 : 8169 & 9387 : 9207 & 9242 : 9376 \\ \hline
\begin{tabular}[c]{@{}c@{}}\# boundary\\ nodes\end{tabular} & 94 & 98 & 90 \\ \hline
\begin{tabular}[c]{@{}c@{}}\# cut\\ links\end{tabular} & 95 & 101 & 81 \\ \hline
$\psi$ & 9580.93 & 6810.11 & 16223.7 \\ \hline
\begin{tabular}[c]{@{}c@{}}Centralized\\ time to reach\\ gap 1E-4\end{tabular} & 502.72 & 502.72 & 502.72 \\ \hline
\begin{tabular}[c]{@{}c@{}}Total DSTAP\\ warmstarting\\ time (sec)\end{tabular} & 448.66 & 410.68 & 416.46 \\ \hline
\begin{tabular}[c]{@{}c@{}}Fullnet\\ gap from 1\\ itr of DSTAP\end{tabular} & 0.1698 & 0.1312 & 0.1712 \\ \hline
\end{tabular}
\end{table}

\begin{sidewaystable}[]
\centering
\caption{Berlin-center Network Partition}
\label{tab:appendixBerlinResult}
\begin{tabular}{|c|c|c|c|c|c|}
\hline
\multicolumn{6}{|c|}{\cellcolor[HTML]{C0C0C0}\textbf{Berlin-center network ($n=12981$, $m=28376$)}} \\ \hline
\textbf{Partition} & \textbf{Spectral} & \textbf{\begin{tabular}[c]{@{}c@{}}$\psi-$FM\\ Spectral\end{tabular}} & \textbf{METISv1} & \textbf{METISv2} & \textbf{\begin{tabular}[c]{@{}c@{}}$\psi-$FM\\ METISv2\end{tabular}} \\ \hline
$n_1:n_2$ & 6443 : 6538 & 6450 : 6531 & 5891 : 5794 & 6432 : 6549 & 6427 : 6554 \\ \hline
$m_1:m_2$ & 13590 : 14634 & 13610 : 14593 & 13111 : 12966 & 13527 : 14730 & 13498 : 14743 \\ \hline
\begin{tabular}[c]{@{}c@{}}\# boundary\\ nodes\end{tabular} & 170 & 183 & 217 & 127 & 135 \\ \hline
\begin{tabular}[c]{@{}c@{}}\# cut\\ links\end{tabular} & 146 & 167 & 231 & 113 & 129 \\ \hline
$\psi$ & 4700.74 & 3789.65 & 995.5 & 3429.92 & 2934.01 \\ \hline
\begin{tabular}[c]{@{}c@{}}Centralized\\ time to reach\\ gap 1E-4 (sec)\end{tabular} & \multicolumn{5}{c|}{149.79} \\ \hline
\begin{tabular}[c]{@{}c@{}}Total DSTAP\\ warmstarting\\ time (sec)\end{tabular} & 78.242 & 87.56 & 63.41 & 77.44 & 77.91 \\ \hline
\begin{tabular}[c]{@{}c@{}}Fullnet\\ gap from 1\\ itr of DSTAP\end{tabular} & 0.0535 & 0.0289 & 0.0161 & 0.0273 & 0.0272 \\ \hline
\end{tabular}
\end{sidewaystable}

\begin{sidewaystable}[]
\centering
\caption{Goldcoast Network Partition}
\label{tab:appendixGoldcoastResult}
\begin{tabular}{|c|c|c|c|c|c|c|}
\hline
\multicolumn{7}{|c|}{\cellcolor[HTML]{C0C0C0}\textbf{Goldcoast network ($n=4807$, $m=11140$)}} \\ \hline
\textbf{Partition} & \textbf{Spectral} & \textbf{\begin{tabular}[c]{@{}c@{}}$\psi-$FM\\ Spectral\end{tabular}} & \textbf{METISv1} & \textbf{\begin{tabular}[c]{@{}c@{}}$\psi-$FM\\ METISv1\end{tabular}} & \textbf{METISv2} & \textbf{\begin{tabular}[c]{@{}c@{}}$\psi-$FM\\ METISv2\end{tabular}} \\ \hline
$n_1:n_2$ & 2194 : 2587 & 2194 : 2587 & 2421 : 2356 & 2419 : 2358 & 2397 : 2384 & 2396 : 2385 \\ \hline
$m_1:m_2$ & 5108 : 6008 & 5108 : 6008 & 5619 : 5474 & 5615 : 5479 & 5518 : 5598 & 5516 : 5599 \\ \hline
\begin{tabular}[c]{@{}c@{}}\# boundary\\ nodes\end{tabular} & 23 & 23 & 35 & 34 & 22 & 23 \\ \hline
\begin{tabular}[c]{@{}c@{}}\# cut\\ links\end{tabular} & 20 & 20 & 33 & 32 & 20 & 21 \\ \hline
$\psi$ & 241.92 & 241.92 & 929.43 & 415.63 & 2803.21 & 2803.21 \\ \hline
\begin{tabular}[c]{@{}c@{}}Centralized\\ time to reach\\ gap 1E-4 (sec)\end{tabular} & \multicolumn{6}{c|}{615.11} \\ \hline
\begin{tabular}[c]{@{}c@{}}Total DSTAP\\ warmstarting\\ time (sec)\end{tabular} & 545.85 & 555.12 & 545.05 & 625.9 & 563.03 & 513.99 \\ \hline
\begin{tabular}[c]{@{}c@{}}Fullnet\\ gap from 1\\ itr of DSTAP\end{tabular} & 0.0607 & 0.0598 & 0.069 & 0.068 & 0.0682 & 0.075 \\ \hline
\end{tabular}
\end{sidewaystable}

\begin{sidewaystable}[]
\centering
\caption{Chicago-regional Network Partition}
\label{tab:appendixChicagoResult}
\begin{tabular}{|c|c|c|c|c|c|c|}
\hline
\multicolumn{7}{|c|}{\cellcolor[HTML]{C0C0C0}\textbf{Chicago regional network ($n=12982$, $m=39018$)}} \\ \hline
\textbf{Partition} & \textbf{Spectral} & \textbf{\begin{tabular}[c]{@{}c@{}}$\psi-$FM\\ Spectral\end{tabular}} & \textbf{METISv1} & \textbf{\begin{tabular}[c]{@{}c@{}}$\psi-$FM\\ METISv1\end{tabular}} & \textbf{METISv2} & \textbf{\begin{tabular}[c]{@{}c@{}}$\psi-$FM\\ METISv2\end{tabular}} \\ \hline
$n_1:n_2$ & 6399 : 6580 & 6405 : 6574 & 6562 : 6411 & 6555 : 6418 & 6352 : 6627 & 6351 : 6628 \\ \hline
$m_1:m_2$ & 19136 : 19675 & 19145 : 19673 & 19289 : 19425 & 19277 : 19440 & 19240 : 19627 & 19229 : 19630 \\ \hline
\begin{tabular}[c]{@{}c@{}}\# boundary\\ nodes\end{tabular} & 192 & 195 & 246 & 241 & 152 & 153 \\ \hline
\begin{tabular}[c]{@{}c@{}}\# cut\\ links\end{tabular} & 207 & 200 & 288 & 285 & 151 & 159 \\ \hline
$\psi$ & 81512.19 & 48238.04 & 12598.71 & 12598.71 & 33672.62 & 32821.14 \\ \hline
\begin{tabular}[c]{@{}c@{}}Centralized\\ time to reach\\ gap 1E-4 (sec)\end{tabular} & \multicolumn{6}{c|}{1668.28} \\ \hline
\begin{tabular}[c]{@{}c@{}}Total DSTAP\\ warmstarting\\ time (sec)\end{tabular} & 1558.96 & 1558.69 & 4696.46 & 1568.57 & 1544.49 & 1680.52 \\ \hline
\begin{tabular}[c]{@{}c@{}}Fullnet\\ gap from 1\\ itr of DSTAP\end{tabular} & 0.0623 & 0.0611 & 0.1077 & 0.0955 & 0.0664 & 0.0744 \\ \hline
\end{tabular}
\end{sidewaystable}

\begin{sidewaystable}[]
\centering
\caption{Philadelphia Network Partition}
\label{tab:appendixPhiladelphiaResult}
\begin{tabular}{|c|c|c|c|c|c|c|}
\hline
\multicolumn{7}{|c|}{\cellcolor[HTML]{C0C0C0}\textbf{Philadelphia network ($n=13389$, $m=40003$)}} \\ \hline
\textbf{Partition} & \textbf{Spectral} & \textbf{\begin{tabular}[c]{@{}c@{}}$\psi-$FM\\ Spectral\end{tabular}} & \textbf{METISv1} & \textbf{\begin{tabular}[c]{@{}c@{}}$\psi-$FM\\ METISv1\end{tabular}} & \textbf{METISv2} & \textbf{\begin{tabular}[c]{@{}c@{}}$\psi-$FM\\ METISv2\end{tabular}} \\ \hline
$n_1:n_2$ & 4867 : 8522 & 4863 : 8526 & 6858 : 6531 & 6852 : 6537 & 6734 : 6655 & 6728 : 6661 \\ \hline
$m_1:m_2$ & 14751 : 25130 & 14743 : 25147 & 19798 : 19929 & 19773 : 19952 & 20624 : 19223 & 20606 : 19239 \\ \hline
\begin{tabular}[c]{@{}c@{}}\# boundary\\ nodes\end{tabular} & 104 & 107 & 259 & 252 & 152 & 152 \\ \hline
\begin{tabular}[c]{@{}c@{}}\# cut\\ links\end{tabular} & 122 & 113 & 276 & 278 & 156 & 158 \\ \hline
$\psi$ & 459897.3 & 359189.24 & 367387.31 & 281971.62 & 310871.26 & 278558.5 \\ \hline
\begin{tabular}[c]{@{}c@{}}Centralized\\ time to reach\\ gap 1E-4 (sec)\end{tabular} & \multicolumn{6}{c|}{2403.5} \\ \hline
\begin{tabular}[c]{@{}c@{}}Total DSTAP\\ warmstarting\\ time (sec)\end{tabular} & 2354.91 & 2331.33 & 2689.24 & 2418.92 & 2469.81 & 2351.194 \\ \hline
\begin{tabular}[c]{@{}c@{}}Fullnet\\ gap from 1\\ itr of DSTAP\end{tabular} & 0.2143 & 0.0995 & 0.2937 & 0.2260 & 0.8109 & 0.7226 \\ \hline
\end{tabular}
\end{sidewaystable}

\end{document}